\documentclass{amsart}
%\TagsOnRight
%%%%Definitions
\usepackage{amssymb,amscd,verbatim, amsthm,amsmath,amsgen,xspace}
\usepackage{latexsym}
\usepackage{amsfonts}
\usepackage{color}
\usepackage{ulem}
\usepackage{csquotes}
\usepackage{pst-node}
\usepackage{tikz-cd} 
\usepackage[all]{xy}

\usepgflibrary{arrows}

\newcommand{\ka}{\mathfrak{k}}
\newcommand{\p}{\mathfrak{p}}
\newcommand{\g}{\mathfrak{g}}
\newcommand{\h}{\mathfrak{h}}
\newcommand{\C}{{\ensuremath{\mathbb{C}}}}

\newcommand\Span{\operatorname{span}}

\def\End{\mathop{\hbox {End}}\nolimits}

\DeclareMathOperator\Ker{Ker}

\newcommand{\pf}{\begin{proof}}
\newcommand{\epf}{\end{proof}}
\newcommand{\eq}{\begin{equation}}
\newcommand{\eeq}{\end{equation}}
\newcommand{\eqn}{\begin{equation*}}
\newcommand{\eeqn}{\end{equation*}}

\newcommand{\fra}{\mathfrak{a}}

\newcommand{\frg}{\mathfrak{g}}
\newcommand{\frh}{\mathfrak{h}}
\newcommand{\frk}{\mathfrak{k}}

\newcommand{\frq}{\mathfrak{q}}

\newcommand{\frt}{\mathfrak{t}}

\newcommand{\frsl}{\mathfrak{sl}}

%%
%

%
%%
%
%

%
%
%
%%%%%%%%% Matrices, etc %%%%%%%%%%%%%%%%%%%%%%%%%%%%%

\theoremstyle{plain}
\newtheorem{theorem}{Theorem}[section]
\newtheorem{cor}[theorem]{Corollary}
\newtheorem{prop}[theorem]{Proposition}
\newtheorem{lemma}[theorem]{Lemma}
\newtheorem{remark}[theorem]{Remark}

\newtheorem{definition}[theorem]{Definition}

\numberwithin{equation}{section}

% for use with AMSTeX, description is below
% Provedl jsem male zmeny, takze to je pouzitelne i v %AMSLaTeXu. L.K.
% Pridal jsem whiteroot... (13.5.2004)

\let\ssize\scriptstyle
%\newsymbol\diagup 231E
%\newsymbol\diagdown 231F
\newif\ifFIRST\newdimen\MAXright\MAXright0pt
\def\sdynkin{\bgroup\eightpoint\dynkin}
\def\endsdynkin{\enddynkin\egroup}
\def\dynkin{\bgroup\FIRSTtrue\hskip.5em\setbox1\hbox{$\diagup$}%
	\setbox2\hbox{$\diagdown$}%
	\setbox0\hbox to2\wd1{\hrulefill}%
	%\setbox3\hbox{$\circ$}%
	\setbox3\hbox{$\bullet$}%
	\setbox4\hbox{$\times$}%
	\setbox7\hbox{$\circ$}%       (L.K.)
	\def\whiteroot##1{\ifFIRST\setbox5\hbox{$##1$}\ifdim\wd5>1.3em%       (L.K.)
		\hskip-.5em\hskip.5\wd5\fi\fi\FIRSTfalse%                             (L.K.)
		\hskip-.25em\raise1.5\wd3\hbox to0pt{\hss\hskip.45em$%                (L.K.)
			\ssize##1$\hss}\copy7\hskip-.25em\setbox6\hbox{$##1$}%                (L.K.)
		\MAXright\wd6}%                                                       (L.K.)
	\def\root##1{\ifFIRST\setbox5\hbox{$##1$}\ifdim\wd5>1.3em%
		\hskip-.5em\hskip.5\wd5\fi\fi\FIRSTfalse%
		\hskip-.25em\raise1.5\wd3\hbox to0pt{\hss\hskip.45em$%
			\ssize##1$\hss}\copy3\hskip-.25em\setbox6\hbox{$##1$}%
		\MAXright\wd6}%
	\def\whitedroot##1{\ifFIRST\setbox5\hbox{$##1$}\ifdim\wd5>1.3em% (L.K.)
		\hskip-.5em\hskip.5\wd5\fi\fi\FIRSTfalse% (L.K.)
		\hskip-.25em\lower1.8\wd3\hbox to0pt{\hss\hskip.45em$%  (L.K.)
			\ssize##1$\hss}\copy7\hskip-.25em\setbox6\hbox{$##1$}% (L.K.)
		\MAXright\wd6}%
	\def\whiterroot##1{\hskip-.25em\copy7\hbox to0pt{\hskip.3em$\ssize##1$\hss}%
		\hskip-.25em\setbox6\hbox{\hskip.6em$##1##1$}%
		\MAXright\wd6}%
	\def\droot##1{\ifFIRST\setbox5\hbox{$##1$}\ifdim\wd5>1.3em%
		\hskip-.5em\hskip.5\wd5\fi\fi\FIRSTfalse%
		\hskip-.25em\lower1.8\wd3\hbox to0pt{\hss\hskip.45em$%
			\ssize##1$\hss}\copy3\hskip-.25em\setbox6\hbox{$##1$}%
		\MAXright\wd6}%
	\def\rroot##1{\hskip-.25em\copy3\hbox to0pt{\hskip.3em$\ssize##1$\hss}%
		\hskip-.25em\setbox6\hbox{\hskip.6em$##1##1$}%
		\MAXright\wd6}%
	\def\norroot##1{\hskip-.36em\copy4\hbox to0pt{\hskip.3em$\ssize##1$\hss}%
		\hskip-.48em\setbox6\hbox{\hskip.6em$##1##1$}%
		\MAXright\wd6}%
	\def\noroot##1{\ifFIRST\setbox5\hbox{$##1$}\ifdim\wd5>1.3em%
		\hskip-.5em\hskip.5\wd5\fi\fi\FIRSTfalse%
		\hskip-.36em\raise1.5\wd3\hbox to0pt{\hss\hskip.6em$%
			\ssize##1$\hss}\copy4\hskip-.38em\setbox6\hbox{$##1$}%
		\MAXright\wd6}%
	\def\nodroot##1{\ifFIRST\setbox5\hbox{$##1$}\ifdim\wd5>1.3em%
		\hskip-.5em\hskip.5\wd5\fi\fi\FIRSTfalse%
		\hskip-.36em\lower1.8\wd3\hbox to0pt{\hss\hskip.6em$%
			\ssize##1$\hss}\copy4\hskip-.38em\setbox6\hbox{$##1$}%
		\MAXright\wd6}%
	\def\nolink{\hskip\wd0}%      (L.K.)
	\def\link{\raise.22em\copy0}%
	\def\llink##1{\raise.32em\copy0\hskip-\wd0%
		\raise.12em\copy0\hskip-.5\wd0\hbox to0pt{\hss$##1$\hss}\hskip.5\wd0}%
	\def\lllink##1{\raise.22em\copy0\hskip-\wd0\raise.32em\copy0\hskip-\wd0%
		\raise.12em\copy0\hskip-.5\wd0\hbox to0pt{\hss$##1$\hss}\hskip.5\wd0}%
	\def\rootupright##1{\hbox to0pt{\raise.45em\copy1\hskip-.25em\raise1.3\ht1%
			\hbox{\copy3\hskip.3em$\ssize##1$}\hss}%
		\setbox6\hbox{\hskip.6em\copy1\copy1$##1##1$}%
		\ifdim\MAXright<\wd6\MAXright\wd6\fi}%
	\def\whiterootupright##1{\hbox to0pt{\raise.45em\copy1\hskip-.25em\raise1.3\ht1% (L.K.)
			\hbox{\copy7\hskip.3em$\ssize##1$}\hss}% (L.K.)
		\setbox6\hbox{\hskip.6em\copy1\copy1$##1##1$}% (L.K.)
		\ifdim\MAXright<\wd6\MAXright\wd6\fi}% (L.K.)
	\def\norootupright##1{\hbox to0pt{\raise.45em\copy1\hskip-.36em\raise1.3\ht1%
			\hbox{\copy4\hskip.3em$\ssize##1$}\hss}%
		\setbox6\hbox{\hskip.6em\copy1\copy1$##1##1$}%
		\ifdim\MAXright<\wd6\MAXright\wd6\fi}%
	\def\rootdownright##1{\hbox to0pt{\raise-.5em\copy2\hskip-.25em\raise-1.35\ht1%
			\hbox{\copy3\hskip.3em$\ssize##1$}\hss}\setbox6%
		\hbox{\hskip.6em\copy2\copy2$##1##1$}%
		\ifdim\MAXright<\wd6\MAXright\wd6\fi}%
	\def\whiterootdownright##1{\hbox to0pt{\raise-.5em\copy2\hskip-.25em\raise-1.35\ht1% (L.K.)
			\hbox{\copy7\hskip.3em$\ssize##1$}\hss}\setbox6% (L.K.)
		\hbox{\hskip.6em\copy2\copy2$##1##1$}% (L.K.)
		\ifdim\MAXright<\wd6\MAXright\wd6\fi}% (L.K.)
	\def\rootdown##1{\hbox to0pt{\hskip-.05em\vrule height.25em depth.65em%
			\hskip-.25em\raise-.95em\hbox{\copy3\hskip.3em$\ssize##1$}\hss}%
		\setbox6\hbox{$##1$}%
		\ifdim\MAXright<\wd6\MAXright\wd6\fi}%
	\def\whiterootdown##1{\hbox to0pt{\hskip-.05em\vrule height.25em depth.65em% (L.K.)
			\hskip-.25em\raise-.95em\hbox{\copy7\hskip.3em$\ssize##1$}\hss}% (L.K.)
		\setbox6\hbox{$##1$}% (L.K.)
		\ifdim\MAXright<\wd6\MAXright\wd6\fi}% (L.K.)
	\def\dots{\hskip.5em\cdots\hskip.5em}}%
\def\enddynkin{\ifdim\MAXright>1em\hskip.5\MAXright\else\hskip.5em\fi\egroup}%

\begin{document}

\date{\today}
\title[Construction of discrete series of $SO_{e}(4, 1)$ via Dirac induction]{Construction of discrete series representations of $SO_{e}(4, 1)$ via algebraic Dirac induction}
\author{Ana Prli\'{c}}
\address{Department of Mathematics, University of Zagreb, Bijeni\v cka 30, 10000 Zagreb, Croatia}
\email{anaprlic@math.hr}
\thanks{The author is supported by grant no. 4176 of the Croatian Science Foundation and by the QuantiXLie Centre of Excellence, a project
cofinanced by the Croatian Government and European Union through the
European Regional Development Fund - the Competitiveness and Cohesion
Operational Programme (Grant KK.01.1.1.01.0004). }
\keywords{discrete series, Dirac cohomology, Dirac induction}
\subjclass[2010]{Primary 22E47; Secondary 22E46}
	
\begin{abstract}
The notion of algebraic Dirac induction was introduced by P. Pand\v{z}i\' c and D. Renard. This is a construction which gives representations with prescribed Dirac cohomology. They proved that all holomorphic discrete series representations can be constructed via Dirac induction. All discrete series representations of the group $SO_{e}(4,1)$ are nonholomorphic. In this paper we prove that they can also be constructed using algebraic Dirac induction. 
\end{abstract}

\maketitle

\section{Introduction}
The notion of Dirac cohomology was formulated by Vogan in 1997 \cite{V}. Let $G$ be a connected real reductive Lie group with Cartan involution $\Theta$ such that $K = G^{\Theta}$ is a maximal compact subgroup of $G$ and let $\g = \ka \oplus \p$ be the corresponding Cartan decomposition of the complexified Lie algebra $\g$ of $G$. The algebraic version of the Dirac operator $D$ is defined in \cite{V} as
$$
D = \sum_{i} b_i \otimes d_i \in U(\g) \otimes C(\p),
$$
where $U(\g)$ is the universal enveloping algebra of $\g$, $C(\p)$ is the Clifford algebra of $\p$ with respect to an invariant nondegenerate symmetric bilinear form $B$ on $\g_0$, $b_i$ is a basis of $\p$ and $d_i$ is the dual basis with respect to $B$. From the well known (and easy to prove) formula
$$
D(1 \otimes x) + (1 \otimes x)D = -2 x \otimes 1, \quad \text{for } x \in \p,
$$
it follows that understanding the $\p$ action on the $(\g, K)$--module $X$ comes down to understanding the action of the Dirac operator on $X \otimes S$ where $S$ is the spin module of $C(\p)$. The Dirac cohomology of a Harish-Chandra module $X$ is defined as
$$
H_{V}^{D}(X) = \text{Ker}D/ \text{Im}D \cap \text{Ker}D.
$$
It is a $\tilde{K}$--module, where $\tilde{K}$ is the spin double cover of $K$. In \cite{V} Vogan conjectured and in \cite{HP1} Huang and Pand\v{z}i\'c proved that Dirac cohomology, if nonzero, determines the infinitesimal character of the representation. Namely, the following theorem holds:
Let $\h=\frt\oplus\fra$ be a fundamental Cartan subalgebra of $\frg$. We view $\frt^*\subset\frh^*$ by extending functionals on $\frt$ by 0 over $\fra$. We fix compatible positive root systems $R_{\g}^{+}$ and $R_{\ka}^{+}$ for $(\g, \h)$ respectively $(\ka, \frt)$.
\begin{theorem}
\label{HPmain}
Let $X$ be a $(\frg,K)$-module with infinitesimal character $\Lambda\in\frh^*$. Assume that $H_V^{D}(X)$ contains the irreducible $K$--module $E_\gamma$ with highest weight $\gamma\in\frt^*$. Then 
$$
\gamma+\rho_\frk = w \Lambda \text{ for some } w \in W_{\g}.
$$
\end{theorem}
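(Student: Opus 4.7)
The strategy is a three-step reduction. First, one works inside the superalgebra $U(\g)\otimes C(\frp)$, equipped with the diagonal $\frk$-action via $X\mapsto X\otimes 1 + 1\otimes \alpha(X)$, where $\alpha:\frk\to C(\frp)$ is the quadratic lift of $\ad|_{\frp}$. Denote the image by $\frk_{\Delta}$; then $D$ is $\tilde{K}$-invariant, and a direct computation using an orthonormal basis of $\frp$ yields the Parthasarathy--Vogan identity
$$
D^2 \;=\; -\Omega_{\g}\otimes 1 \;+\; \Omega_{\frk_{\Delta}} \;+\; \|\rho_{\frk}\|^2 - \|\rho_{\g}\|^2,
$$
where $\Omega_{\g}$ and $\Omega_{\frk_{\Delta}}$ are the respective Casimirs. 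This alone determines how $\Omega_{\g}$ acts on $\Ker D$, but to recover the full infinitesimal character one needs analogous information for every element of $Z(\g)$.

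The second and core step is Vogan's conjecture itself: for every $z\in Z(\g)$ there exist $\zeta(z)\in Z(\frk_{\Delta})$ and $a\in (U(\g)\otimes C(\frp))^{\tilde K}$ such that
$$
z\otimes 1 \;=\; \zeta(z) + Da + aD,
$$
and the assignment $z\mapsto\zeta(z)$ is an algebra homomorphism $Z(\g)\to Z(\frk_{\Delta})$. This is the main obstacle; I would prove it by an inductive filtration argument on $U(\g)\otimes C(\frp)$, using the $D^2$-formula above together with the super-derivation property of $[D,\,\cdot\,]$ to reduce elements of $Z(\g)\otimes 1$, modulo $D$-boundaries, to the subalgebra $Z(\frk_{\Delta})$. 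A separate analysis then identifies $\zeta$, after applying the Harish-Chandra isomorphisms $Z(\g)\cong S(\h)^{W_{\g}}$ and $Z(\frk_{\Delta})\cong S(\frt)^{W_{\frk}}$ (suitably $\rho$-normalized), with the natural restriction
$$
S(\h)^{W_{\g}} \hookrightarrow S(\h)^{W_{\frk}} \twoheadrightarrow S(\frt)^{W_{\frk}}
$$
induced by the decomposition $\h=\frt\oplus\fra$ and the chosen compatible positive systems.

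Finally, I apply the identity to a highest weight vector $v\in E_\gamma\subset H_V^D(X)$. On $X\otimes S$ every $z\in Z(\g)$ acts by the scalar $\chi_{\Lambda}(z)$; since $v\in\Ker D$ and the boundary term $Da+aD$ kills the cohomology class $[v]$, we get $\zeta(z)\cdot[v]=\chi_{\Lambda}(z)\,[v]$. But $\zeta(z)\in Z(\frk_{\Delta})$ acts on the $\tilde K$-type $E_\gamma$ by the scalar $\chi^{\frk}_{\gamma+\rho_{\frk}}(\zeta(z))$, so
$$
\chi_{\Lambda}(z) \;=\; \chi^{\frk}_{\gamma+\rho_{\frk}}\bigl(\zeta(z)\bigr) \qquad \text{for every } z\in Z(\g).
$$
Translated through the identification of $\zeta$ with restriction, this says that every $W_{\g}$-invariant polynomial on $\h^*$ takes the same value at $\gamma+\rho_{\frk}$ (viewed in $\h^*$ via extension by $0$ on $\fra$) and at $\Lambda$. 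Hence $\gamma+\rho_{\frk}$ and $\Lambda$ lie in the same $W_{\g}$-orbit, i.e.\ $\gamma+\rho_{\frk}=w\Lambda$ for some $w\in W_{\g}$, which is the claim.
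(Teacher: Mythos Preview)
The paper does not prove this theorem; it is stated in the introduction as background and attributed to Huang and Pand\v{z}i\'c \cite{HP1}, who established it after Vogan conjectured it. So there is no proof in the paper to compare against.

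Your outline is essentially the Huang--Pand\v{z}i\'c argument. The logic of the third step is sound: for $v\in\Ker D$ one has $(z\otimes 1)v-\zeta(z)v=D(av)\in\Ime D$, and both $(z\otimes 1)v$ and $\zeta(z)v$ lie in $\Ker D$ because $z\otimes 1$ and $\zeta(z)$ commute with $D$; hence the classes agree in $H_V^D(X)$ and the scalar comparison goes through. The only place where real work is hidden is your second step. Saying you would prove the identity $z\otimes 1=\zeta(z)+Da+aD$ ``by an inductive filtration argument'' is accurate in spirit but skates over the genuine content of \cite{HP1}: one needs to show that the differential $d=\ad(D)$ on $(U(\g)\otimes C(\frp))^{\tilde K}$, after passing to the associated graded for the filtration by Clifford degree, has cohomology concentrated in $Z(\frk_\Delta)$, and then lift back. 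The identification of $\zeta$ with the restriction map under the two Harish-Chandra isomorphisms is a separate (and nontrivial) verification. If you were writing this up in full, those two points are where the effort goes; the rest is exactly as you describe.
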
 
One of the consequences of the above theorem is the fact that unitary $(\g, K)$--modules with nonzero Dirac cohomology are exactly those for which Dirac inequality becomes equality on some $\tilde{K}$--types of $X \otimes S$. Namely, Dirac inequality says that for a unitary $(\g, K)$--module $X$ and an irreducible representation $E_{\tau}$ of $\tilde{K}$ with the highest weight $\tau \in \h^{*}$ such that the multiplicity of the $\tilde{K}$--type $E_{\tau}$ in $X \otimes S$ is at least one, then 
$$
||\tau + \rho_{\ka}|| \geq || \Lambda ||,
$$
where $\Lambda \in \h^{*}$ denotes the infinitesimal character of $X$. Important examples of unitary modules with nonzero Dirac cohomology are discrete series representations, most of the more general unitary modules with strongly regular infinitesimal character, the so-called $A_{\frq}(\lambda)$--modules and many others. Classification of irreducible unitary $(\g, K)$--modules with nonzero Dirac cohomology is still an open problem.

In \cite{PR} the authors describe certain constructions which give representations with prescribed Dirac cohomology by tensoring the algebra $U(\g) \otimes C(\p)$ with the Dirac cohomology over a certain subalgebra that contains the Dirac operator. Since the functor of Dirac cohomology admits no adjoint, two alternative definitions were proposed in \cite{PR}, both of which coincide with the Vogan's definition for unitary and finite dimensional $(\g, K)$--modules. They are called Dirac cohomology and homology. The functor of Dirac cohomology is left exact and admits a right adjoint, while the functor of Dirac homology is right exact and admits a left adjoint. These adjoints are called Dirac induction functors. For example, all holomorphic discrete series can be constructed by tensoring the algebra $U(\g) \otimes C(\p)$ with the Dirac cohomology over the algebra generated by $\ka_{\Delta}$, the ideal $\mathcal{I} \subset (U(\g) \otimes C(\p))^{K}$ generated by the Dirac operator and $C(\p)^{K}$.

Let us review the definition and the main results about Dirac induction as defined in \cite{PR}. By composing the adjoint action of $\ka$ on $\p$ with the embedding of the Lie algebra $\mathfrak{so}(\p)$ into the Clifford algebra $C(\p)$, we get a Lie algebra map $\alpha: \ka \longrightarrow C(\p)$. Now we can embed the Lie algebra $\ka$ diagonally into $U(\g) \otimes C(\p)$, by
$$
X \mapsto X_{\Delta} = X \otimes 1 + 1 \otimes \alpha(X).
$$
We will denote the image of this embedding by $\ka_{\Delta}$ and $\mathcal{A} = U(\g) \otimes C(\p)$. Let $\mathcal{I}$ be the two-sided ideal in the algebra of $K$--invariants $\mathcal{A}^{K}$ generated by $D$ and let $\mathcal{B}$ be the $K$--invariant subalgebra of $\mathcal{A}$ with unit, generated by $\ka_{\Delta}$ and $\mathcal{I}$. 
\begin{definition}
Let $X$ be a $(\mathfrak{g}, K)$--module. Dirac cohomology of $X$ is defined as
$$
H^{D}(X) = \{ v \in X \otimes S \, | \,  av = 0, \forall a \in \mathcal{I}\},
$$
while the functor of the Dirac homology is defined as
$$
H_{D}(X) = X \otimes S / \mathcal{I}(X \otimes S).
$$
\end{definition}
If the module $X$ is finite-dimensional or unitary, then we have 
$$
H^{D}(X) = H_{D} (X) = H_{V}^{D}(X) = \text{Ker}(D) = \text{Ker}(D^2).
$$
The functor of the Dirac cohomology $H^{D} : \mathcal{M}(\mathcal{A}, \tilde{K}) \longrightarrow \mathcal{M}(\ka_{\Delta}, \tilde{K})$ has a left adjoint, the functor
$$
\text{Ind}_D : W \mapsto \mathcal{A} \otimes_{\mathcal{B}}W,
$$
while the functor of the Dirac homology $H^{D} : \mathcal{M}(\mathcal{A}, \tilde{K}) \longrightarrow \mathcal{M}(\ka_{\Delta}, \tilde{K})$ has a right adjoint, the functor
$$
\text{Ind}^D : W \mapsto \text{Hom}_{\mathcal{B}}(\mathcal{A}, W)_{\tilde{K}-\text{finite}}.
$$
The algebra $\mathcal{B}$ is the smallest reasonable choice for the algebra over which we tensor (that containes the Dirac operator). However, even for the simplest nontrivial example of a $(\g, K)$--module $(\mathfrak{sl}(2, \mathbb{C}), SO(2))$, the modules we get by this version of induction are not irreducible. Therefore, we would like to tensor over a bigger algebra $\mathcal{B}$. The second possible choice is the algebra $\mathcal{B} = U(\ka_{\Delta}) \mathcal{A}^{K}$ and the third one is the so-called ``intermediate" version where we tensor over the algebra $U(\ka_{\Delta})(C(\p)^{K} + \mathcal{I})$. In both of the last cases we can consider $H^{D}$ as a functor from the category of $(\mathcal{A}, \tilde{K})$--modules to the category of $(\mathcal{B}, \tilde{K})$--modules on which $\mathcal{I}$ acts by zero. Then the functor ${\text{Ind}}_D = \mathcal{A} \otimes_{\mathcal{B}} \cdot $ is left adjoint to the functor $H^{D}$, while the functor ${\text{Ind}}^D = \text{Hom}_{\mathcal{B}} ({\mathcal{A}}, \cdot)_{\tilde{K}-\text{finite}}$ is right adjoint to the functor $H_{D}$. It turns out that the algebra $C(\p)^{K}$ is very easy to describe unlike the algebra of all $K$--invariants in $\mathcal{A}$. The last one contains the algebra $U(\g)^{K}$ which is, in general, hard to describe. Since the algebra $\mathcal{B} = U(\ka_{\Delta})\mathcal{A}^{K}$ contains $U(\g)^K$ as $U(\g)^{K} \otimes 1$, it can be expected that this version of induction where we tensor over $\mathcal{B} = U(\ka_{\Delta})\mathcal{A}^{K}$ could be, in general, hard to describe. Nevertheless, the algebra $\mathcal{A}^{K}$ can be explicitly described for the groups $SU(2,1)$ and $SO_{e}(4,1)$. While all holomorphic discrete series representations can be constructed via intermediate version of Dirac induction, it turns out that for the construction of the nonholomorphic discrete series of the groups $SU(2,1)$ (see \cite{Pr1}) and $SO_{e}(4,1)$ we have to tensor over the algebra $U(\ka_{\Delta}) \mathcal{A}^{K}$. Although this does not look promising for generalization, since the algebra $\mathcal{A}^{K}$ is hard to describe, it seems to the author that for the construction of the more general examples of discrete series, it will be enough to tensor over the algebra generated by $U(\ka_{\Delta})$, $\mathcal{I}$, $C(\p)^{K}$ and $Z(\ka) \otimes 1$. The last algebra is exactly the algebra $U(\ka_{\Delta}) \mathcal{A}^{K}$ if $G$ is $SU(2,1)$ or $SO_{e}(4,1)$ (see \cite{Pr} and \cite{Pr2}).

The paper is organized as follows. In Section $2$ we give the $K$--types of the discrete series representation $A_{\mathfrak{b}}(\lambda)$ of $SO_{e}(4,1)$ using the results from \cite{M}. Further, we calculate the action of $\p$ on the highest weight vectors of the $K$--types of $A_{\mathfrak{b}}(\lambda)$ and we describe a basis of the $A_{\mathfrak{b}}(\lambda)$--module. In Section $3$, we reduce the obvious spanning set of the induced module to the spanning set which will later be shown to be a basis of the induced module. Finally, in Section $4$ we construct the discrete series of the group $SO_{e}(4,1)$ via algebraic Dirac induction.

In the future, we hope to generalize our results to all discrete series representations and to study the connection between Dirac induction, translation functors and cohomological induction. 

Finally, let us mention that the notion of the Dirac induction was recently introduced for graded affine Hecke algebras in \cite{COT}.  In that setting, the map $Ind_D$ is a Hecke algebra analog of the explicit realization of the Baum-Connes assembly map in the $K-theory$ of the reduced $C^{*}$--algebra of a real reductive group. This explicit realization uses Dirac operators.  Our version of induction does not appear to be directly related to the construction of \cite{COT}, but it would be very interesting to find some relations between the two constructions. We plan to study this question in near future.

\vspace{.2in}

\section{Discrete series representations of $SO_{e}(4,1)$}
The (real) Lie algebra of the Lie group $G = SO_{e}(4,1)$ is $\g_0 = \mathfrak{so}(4,1)$ .
We use the following basis of the complexification $\g$ of $\frg_0$ (see \cite{Pr2}):
\begin{gather*}
H_1 = i e_{12} - i e_{21},\quad H_2  = i e_{34} - i e_{43}, \\ 
E_1  = \frac{1}{2}(e_{13} - e_{24} - ie_{23} - ie_{14} - e_{31} + e_{42} + ie_{32} + ie_{41}), \\
E_2  = \frac{1}{2}(e_{13} + e_{24} - ie_{23} + ie_{14} - e_{31} - e_{42} + ie_{32} - ie_{41}), \\
F_1  = \frac{-1}{2}(e_{13} - e_{24} + ie_{23} + ie_{14} - e_{31} + e_{42} - ie_{32} - ie_{41}), \\
F_2  = \frac{-1}{2}(e_{13} + e_{24} + ie_{23} - ie_{14} - e_{31} - e_{42} - ie_{32} + ie_{41}), \\
E_3  = e_{15} - ie_{25} + e_{51} - ie_{52}, \\
E_4  = e_{35} - ie_{45} + e_{53} - ie_{54}, \\
F_3  = e_{15} + ie_{25} + e_{51} + ie_{52}, \\
F_4  = e_{35} + ie_{45} + e_{53} + ie_{54}.
\end{gather*}
where $e_{ij}$ denotes the matrix with the $ij$ entry equal to $1$ and all other entries equal to $0$. 
The commutation relations are given by the following table (see \cite{Pr2}): 
\begin{table}[ht]
\caption{commutator table}
\label{tab:commtable}
\begin{tabular}{|r|| r| r| r| r| r| r| r| r| r|}
  \hline
        & $H_2$ & $E_1$ & $E_2$ & $F_1$         & $F_2$       & $E_3$ & $E_4$  & $F_3$  & $F_4$ \\ \hline \hline
  $H_1$ & $0$   & $E_1$ & $E2$  & $-F_1$        & $-F_2$      & $E_3$ & $0$    & $-F_3$ & $0$   \\ \hline
  $H_2$ &       & $E_1$ & $-E2$ & $-F_1$        & $F_2$       & $0$   & $E_4$  & $0$    & $-F_4$   \\ \hline
  $E_1$ &       &       & $0$   & $H_1 + H_2$   & $0$         & $0$   & $0$    & $-E_4$ & $E_3$   \\ \hline
  $E_2$ &       &       &       & $0$           & $H_1 - H_2$ & $0$   & $E_3$  & $-F_4$ & $0$   \\ \hline
  $F_1$ &       &       &       &               & $0$         & $F_4$ & $-F_3$ & $0$    & $0$   \\ \hline
  $F_2$ &       &       &       &               &             & $E_4$ & $0$    & $0$    & $-F_3$   \\ \hline
  $E_3$ &       &       &       &               &             &       & $2E_1$ & $2H_1$ & $2E_2$   \\ \hline
  $E_4$ &       &       &       &               &             &       &        & $2F_2$ & $2H_2$   \\ \hline
  $F_3$ &       &       &       &               &             &       &        &        & $-2F_1$   \\ \hline
\end{tabular}
\end{table}

Let $\g = \ka \oplus \p$ be the Cartan decomposition of $\g$ corresponding to the Cartan involution $\theta (X) = - X^{\tau}$. Then
\[
\ka = \Span  \{ H_1, H_2, E_1, E_2, F_1, F_2 \} \cong \mathfrak{so}(4, \mathbb{C}),\text{ and } \p = \Span  \{ E_3, E_4, F_3, F_4 \}.
\]
We have $\ka = \ka_1 \oplus \ka_2$, where 
\[
\ka_1 = \Span\{ H_1 + H_2, E_1, F_1 \} \simeq \mathfrak{sl}(2, \mathbb{C}), \quad \ka_2 = \Span\{ H_1 - H_2, E_2, F_2\} \simeq \mathfrak{sl}(2, \mathbb{C})
\]
with $H_1 + H_2$, $E_1$ and $F_1$ (resp. $H_1 - H_2$, $E_2$ and $F_2$) corresponding to the standard basis of $\frsl(2,\mathbb{C})$. Algebras $\ka_1$ and $\ka_2$ mutually commute. In the standard positive root system, the compact positive roots are
$
\epsilon_1 \pm \epsilon_2,
$
and the noncompact positive roots are
$
\epsilon_1, \epsilon_2.
$
For a $\ka_1$--module $V$ and a $\ka_2$--module $W$, the external tensor product $V \boxtimes W$ is equal to the tensor product $V \otimes W$ as a vector space, and it is a $\ka$--module such that $\ka_1$ acts on $V$ and $\ka_2$ acts on $W$. We will denote by $V_m$ a finite dimensional $\mathfrak{sl}(2, \mathbb{C})$--module with the highest weight $m$. 

All discrete series representations are $A_{\mathfrak{b}}(\lambda)$--modules for some Borel subalgebra $\mathfrak{b}$ and some $\lambda \in \h^{*}$ which is is admissible for $\mathfrak{b}$. Here we take
$$
\mathfrak{b} = \h \oplus \g_{\epsilon_1 - \epsilon_2} \oplus \g_{\epsilon_1 + \epsilon_2} \oplus \g_{\epsilon_1} \oplus \g_{\epsilon_2}.
$$
From \cite[Corollary~7.]{M} it follows that the discrete series representation $A_{\mathfrak{b}}(\lambda)$ of $G$ has the following $K$--types
\begin{equation}\label{gama}
V_{\lambda_1 + \lambda_2 + 2 + l + k} \boxtimes V_{\lambda_1 - \lambda_2 - l + k}, \quad k \in \mathbb{N}_0, l \in \{ 0, 1, \cdots, \lambda_1 - \lambda_2\}. 
\end{equation}
Here $V_n \boxtimes V_m$ denotes the $\ka$--module with highest weight $(n, m)$, such that $H_1 + H_2$ acts on the highest weight vector $v_n \boxtimes v_m$ by $n$ and $H_1 - H_2$ acts on the highest weight vector $v_n \boxtimes v_m$ by $m$. All of the above $K$--types are of multiplicity one.  Let us denote the set (\ref{gama}) by $\Gamma(A_{\mathfrak{b}}(\lambda))$. The next step is to find the action of the elements of $\p$ on the highest weight vector $v_n \boxtimes v_m$ of each $K$--type $V_n \boxtimes V_m$ in $\Gamma(A_{\mathfrak{b}}(\lambda))$.

\begin{figure}[ht]
\caption{$K$--types}
\bigskip
\bigskip
\bigskip
\centering
\begin{tikzpicture}
\tikzstyle{every node}=[draw,shape=circle, fill, inner sep=0pt,minimum size=4pt]
\node[shape=circle,draw=black,label=left: \tiny $(m_0 \text{ , } n_0)$] at (0,0) {};
\node[shape=circle,draw=black,label=left: \tiny $(m_0 + 1 \text{ , } n_0 + 1)$] at (1.5,1.5) {};
\node[shape=circle,draw=black,label=left: \tiny $(m_0 + 2 \text{ , } n_0 + 2)$] at (3,3) {};

\filldraw [black] (4.5,4.5) circle (0.5pt);
\filldraw [black] (4.7,4.7) circle (0.5pt);
\filldraw [black] (4.9,4.9) circle (0.5pt);

\node[shape=circle,draw=black,label=left: \tiny $(m_0 + 1 \text{ , } n_0 - 1)$] at (1.5,-1.5) {};
\node[shape=circle,draw=black,label=left: \tiny $(m_0 + 2 \text{ , } n_0)$] at (3,0) {};
\node[shape=circle,draw=black,label=left: \tiny $(m_0 + 3 \text{ , } n_0 + 1)$] at (4.5,1.5) {};

\filldraw [black] (6, 3) circle (0.5pt);
\filldraw [black] (6.2, 3.2) circle (0.5pt);
\filldraw [black] (6.4, 3.4) circle (0.5pt);

\filldraw [black] (3, -3) circle (0.5pt);
\filldraw [black] (3.2, -3.2) circle (0.5pt);
\filldraw [black] (3.4, -3.4) circle (0.5pt);

\filldraw [black] (4.5, -1.5) circle (0.5pt);
\filldraw [black] (4.7, -1.7) circle (0.5pt);
\filldraw [black] (4.9, -1.9) circle (0.5pt);

\filldraw [black] (6, 0) circle (0.5pt);
\filldraw [black] (6.2, -0.2) circle (0.5pt);
\filldraw [black] (6.4, -0.4) circle (0.5pt);

\node[shape=circle,draw=black,label=left: \tiny $(m_0 + n_0 \text{ , } 0)$] at (5,-5) {};
\node[shape=circle,draw=black,label=left: \tiny $(m_0 + n_0 + 1 \text{ , } 1)$] at (6.5,-3.5) {};
\node[shape=circle,draw=black,label=left: \tiny $(m_0 + n_0 + 2 \text{ , } 2)$] at (8,-2) {};

\filldraw [black] (9.5,- 0.5) circle (0.5pt);
\filldraw [black] (9.7,- 0.3) circle (0.5pt);
\filldraw [black] (9.9, -0.1) circle (0.5pt);

\end{tikzpicture}
\end{figure}

In the above figure $m_0 = \lambda_1 + \lambda_2 + 2$ and $n_0 = \lambda_1 - \lambda_2$. The highest weight vector of the $K$--type $\p = \Span  \{ E_3, E_4, F_3, F_4 \} \simeq V_{1} \boxtimes V_{1}$ in $U(\g)$ is $E_3$. The weights of the vectors $E_4, F_3, F_4$ are $(1, -1), (-1, -1), (-1, 1)$, respectively. The action map $a: \p \otimes (V_n \boxtimes V_m) \longrightarrow \p \cdot V_n \boxtimes V_m$ is $\ka$--equivariant. Furthermore, we have
\begin{align*}
(V_1 \boxtimes V_1) \otimes (V_n \boxtimes V_m) & = (V_1 \otimes V_n) \boxtimes (V_1 \otimes V_m) \\
& = (V_{n + 1} \oplus V_{n - 1}) \boxtimes (V_{m + 1} \oplus V_{m-1}) \\
& = (V_{n + 1} \boxtimes V_{m + 1}) \oplus (V_{n + 1} \boxtimes V_{m-1}) \\
& \oplus (V_{n - 1} \boxtimes V_{m + 1}) \oplus (V_{n - 1} \boxtimes V_{m-1}).
\end{align*}
It follows that 
$$
E_{3} \cdot v_{n} \boxtimes v_{m} = \mu_{(n, m)} v_{n + 1} \boxtimes v_{m + 1},
$$
for some scalar $\mu_{(n, m)}$, where $v_{n + 1} \boxtimes v_{m + 1}$ is the highest weight vector of the $K$--type $V_{n + 1} \boxtimes V_{m + 1}$ of $A_{\mathfrak{b}}(\lambda)$. From now on, we will denote the $K$--type $V_n \boxtimes V_m$ by $(n, m)$.

\begin{lemma}\label{E3}
For each $K$--type $(n, m)$ of the discrete series $A_{\mathfrak{b}}(\lambda)$ we have $\mu_{(n, m)} \neq 0$, where $\mu_{(n, m)}$ is as above. Equivalently, $E_3 \cdot v_n \boxtimes v_m \neq 0$.
\end{lemma}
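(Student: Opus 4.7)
The plan is to invoke the unitarity of the discrete series $A_{\mathfrak{b}}(\lambda)$ and reduce the claim to a direct computation of $\|E_3\cdot v_n\boxtimes v_m\|^{2}$ with respect to an invariant Hermitian form. The only structural input from the real form $\g_0=\mathfrak{so}(4,1)$ is the complex-conjugation identity $\overline{E_3}=F_3$, which is immediate from the bases listed above: the two vectors differ only in the signs of their imaginary coefficients, while the matrices $e_{ij}$ themselves are real and hence fixed by conjugation.

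First I would equip $A_{\mathfrak{b}}(\lambda)$ with a positive-definite $\g_0$-invariant Hermitian form $\langle\,\cdot\,,\,\cdot\,\rangle$, so that $\pi(X)^{*}=-\pi(\bar{X})$ for every $X\in\g$. In particular $E_3^{*}=-F_3$. Combining this with the single commutation relation $[E_3,F_3]=2H_1$ from Table \ref{tab:commtable} and the fact that $H_1$ acts on $v_n\boxtimes v_m$ by the scalar $(n+m)/2$, a short manipulation of $\langle E_3\cdot v_n\boxtimes v_m,\, E_3\cdot v_n\boxtimes v_m\rangle$, in which the adjoint $E_3^{*}=-F_3$ is applied twice (once to produce $-F_3E_3$, and once to recognize $-\langle v,E_3F_3v\rangle$ as $\|F_3 v\|^{2}$), yields the clean identity
$$
\|E_3\cdot v_n\boxtimes v_m\|^{2} \;=\; \|F_3\cdot v_n\boxtimes v_m\|^{2} \;+\; (n+m)\,\|v_n\boxtimes v_m\|^{2}.
$$

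To conclude, I would observe that every $K$-type $(n,m)\in\Gamma(A_{\mathfrak{b}}(\lambda))$ is of the form $(n,m)=(m_0+l+k,\,n_0-l+k)$, so
$$
n+m \;=\; m_0+n_0+2k \;=\; 2(\lambda_1+1)+2k \;>\; 0
$$
by the admissibility/dominance condition on $\lambda$ that guarantees $A_{\mathfrak{b}}(\lambda)$ is a genuine discrete series. The second summand on the right is therefore strictly positive while the first is non-negative, so $\|E_3\cdot v_n\boxtimes v_m\|^{2}>0$ and hence $\mu_{(n,m)}\neq 0$.

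The only real care needed is the bookkeeping when moving operators across the conjugate-linear slot of $\langle\,\cdot\,,\,\cdot\,\rangle$ and verifying the conjugation relation $\overline{E_3}=F_3$; neither step poses a serious obstacle. No heavier structural input (an explicit model of $A_{\mathfrak{b}}(\lambda)$, cohomological induction formulas, or a Dirac-inequality argument) is required, though any of those would provide an independent verification.
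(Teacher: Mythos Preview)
Your proof is correct and takes a genuinely different route from the paper's. The paper argues purely algebraically: if $E_3\cdot v_n\boxtimes v_m=0$, then by PBW the $U(\g)$-span of $v_n\boxtimes v_m$ is already exhausted by monomials $F_4^aF_3^bE_4^c$ acting on it, and a simple weight count shows that no such monomial can reach the weight $(n+1,m+1)$, contradicting irreducibility together with the fact that $(n+1,m+1)\in\Gamma(A_{\mathfrak{b}}(\lambda))$. Your argument instead exploits the unitarity of the discrete series and the conjugation relation $\overline{E_3}=F_3$ to obtain the explicit identity $\|E_3 v\|^2=\|F_3 v\|^2+(n+m)\|v\|^2$, which is positive because $n+m=2(\lambda_1+1+k)>0$ on every $K$-type.

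Each approach has its merits. Your norm computation is shorter, gives a quantitative lower bound, and avoids the PBW bookkeeping; it also makes transparent why the highest noncompact root vector cannot kill a highest weight vector in any unitary module whose $K$-types sit in the cone $n+m>0$. The paper's argument, on the other hand, uses only irreducibility and the $K$-type multiplicities, so it would apply verbatim to any irreducible $(\g,K)$-module with the same $K$-spectrum even without an invariant inner product, and it keeps the whole development in the algebraic framework used in the rest of the paper.
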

\begin{proof}
Let us fix some $K$--type $(n, m)$ of the discrete series $A_{\mathfrak{b}}(\lambda)$ with the highest weight vector $v_n \boxtimes v_m$.
Since $A_{\mathfrak{b}}(\lambda)$ is an irreducible $(\g, K)$--module, we have
$$
A_{\mathfrak{b}}(\lambda) = U(\g) \cdot v_n \boxtimes v_m.
$$
It follows from the Poincar\'{e}-Birkhoff-Witt theorem that
$$
A_{\mathfrak{b}}(\lambda) = U(\ka) \cdot \Span_{\mathbb{C}} \{ F_{4}^{a} F_{3}^{b} E_{4}^{c} E_{3}^{d} \, | \, a, b, c, d \in \mathbb{N}_{0} \} \cdot v_n \boxtimes v_m.
$$
Let us assume that $\mu_{n, m} = 0$, i.e., $E_3 \cdot v_n \boxtimes v_m = 0$. Then we have 
\begin{equation}\label{434}
A_{\mathfrak{b}}(\lambda) = U(\ka) \cdot \Span_{\mathbb{C}} \{ F_{4}^{a} F_{3}^{b} E_{4}^{c} \, | \, a, b, c \in \mathbb{N}_{0} \} \cdot v_n \boxtimes v_m.
\end{equation}
The weight of the vector $F_{4}^{a} F_{3}^{b} E_{4}^{c} \cdot v_n \boxtimes v_m$ is equal to $(n + c - b - a, m - c - b + a)$.
Since the $K$--type $(n + 1, m + 1)$ contributes to $A_{\mathfrak{b}}(\lambda)$, then it follows from \eqref{434} that 
\begin{align*}
n + 1 & = n + c - b  - a \\
m + 1 & = m - c - b  + a
\end{align*}
for some $a, b, c \in \mathbb{N}_{0}$. It follows that $b = -1$, a contradiction. Hence $\mu_{(n, m)} \neq 0$.
\end{proof}
Since the weights of the vectors $E_4$, $F_3$ and $F_4$ are $(1, -1)$, $(-1, -1)$ and $(-1, 1)$ respectively, it follows that 
\begin{align*}
E_4 \cdot v_n \boxtimes v_m & = \alpha_{(n , m)} F_2 \cdot v_{n + 1} \boxtimes v_{m + 1} + \beta_{(n, m)} v_{n + 1} \boxtimes v_{m - 1} \\
F_3 \cdot v_n \boxtimes v_m & = \gamma_{(n , m)} F_1 F_2 \cdot v_{n + 1} \boxtimes v_{m + 1} + \delta_{(n, m)} F_1 \cdot v_{n + 1} \boxtimes v_{m - 1} \\
& + \epsilon_{(n, m)} F_2 \cdot v_{n - 1} \boxtimes v_{m + 1} + \eta_{(n, m)} v_{n - 1} \boxtimes v_{m - 1} \\
F_4 \cdot v_n \boxtimes v_m & = \rho_{(n , m)} v_{n - 1} \boxtimes v_{m + 1} + \zeta_{(n, m)} F_1 \cdot v_{n + 1} \boxtimes v_{m + 1}, 
\end{align*}
where $v_{n} \boxtimes v_{m}, v_{n + 1} \boxtimes v_{m + 1}, v_{n + 1} \boxtimes v_{m - 1}, v_{n - 1} \boxtimes v_{m + 1}$ and $v_{n - 1} \boxtimes v_{m - 1}$ are the highest weight vector of the $K$--types $(n, m), (n + 1, m + 1), (n + 1, m - 1), (n - 1, m + 1)$ and $(n - 1, m - 1)$ in $A_{\mathfrak{b}}(\lambda)$. If some of these $K$--types do not contribute to $A_{\mathfrak{b}}(\lambda)$, then the corresponding scalar in the above equations equals $0$.  
\begin{figure}[ht]
\caption{$\p$--action}
\bigskip
\bigskip
\centering
\begin{tikzpicture}
\tikzstyle{every node}=[draw,shape=circle, inner sep=0pt,minimum size=4pt]
\node[shape=circle,draw=black, fill, label=below: \tiny $(n \text{ , } m)$, label=left: $E_3 \qquad \qquad \qquad$] at (0,0) {};
\node[shape=circle,draw=black, fill, label=above: \tiny $(n + 1 \text{ , } m + 1)$] at (1,1) {};
\draw [black,  -triangle 60] (0.2,0.2) -- (0.8,0.8);

\node[shape=circle,draw=black,fill, label=left: \tiny $(n \text{ , } m)$, label=left: $E_4 \qquad \qquad \qquad$] at (6,0) {};
\node[shape=circle,draw=black,label=above: \tiny $(n + 1 \text{ , } m + 1)$] at (7,1) {};
\node[shape=circle,draw=black, fill, label=below: \tiny $(n + 1 \text{ , } m - 1)$] at (7,-1) {};
\draw [black,  -triangle 60] (6.2,0.2) -- (6.8,0.8);
\draw [black,  -triangle 60] (6.2,-0.2) -- (6.8,- 0.8);

\node[shape=circle,draw=black,fill, label=below: \tiny $(n \text{ , } m)$, label=left: $F_3 \qquad \qquad \qquad$] at (0,-6) {};
\node[shape=circle,draw=black,label=above: \tiny $(n + 1 \text{ , } m + 1)$] at (1, -5) {};
\node[shape=circle,draw=black, label=below: \tiny $(n + 1 \text{ , } m - 1)$] at (1,-7) {};
\node[shape=circle,draw=black, fill, label=below: \tiny $(n - 1 \text{ , } m - 1)$] at (-1,-7) {};
\node[shape=circle,draw=black, label=above: \tiny $(n - 1 \text{ , } m + 1)$] at (-1,-5) {};

\draw [black,  -triangle 60] (0.2, - 5.8) -- (0.8, -5.2);
\draw [black,  -triangle 60] (0.2, - 6.2) -- (0.8, -6.8);
\draw [black,  -triangle 60] (-0.2, - 6.2) -- (-0.8, -6.8);
\draw [black,  -triangle 60] (-0.2, - 5.8) -- (-0.8, -5.2);

\node[shape=circle,draw=black,fill, label=below: \tiny $(n \text{ , } m)$, label=left: $F_4 \qquad \qquad \qquad$] at (6,-6) {};
\node[shape=circle,draw=black,label=above: \tiny $(n + 1 \text{ , } m + 1)$] at (7, -5) {};
\node[shape=circle,draw=black, fill, label=above: \tiny $(n - 1 \text{ , } m + 1)$] at (5,-5) {};
\draw [black,  -triangle 60] (6.2, - 5.8) -- (6.8, -5.2);
\draw [black,  -triangle 60] (5.8, -5.8) -- (5.2, -5.2);
                  
\end{tikzpicture}
\end{figure}
\newpage
The black dots in the above figure represent the highest weight vectors of the corresponding $K$--types while the white dots represent lower vectors.
\begin{lemma}\label{E4}
Let us fix some $K$--type $(n, m)$ in $A_{\mathfrak{b}}(\lambda)$. With notation as above, we have $\beta_{(n, m)} \neq 0$ if the $K$--type $(n + 1, m - 1)$ belongs to $A_{\mathfrak{b}}(\lambda)$. 
\end{lemma}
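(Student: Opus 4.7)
The plan is to mimic the proof of Lemma \ref{E3}: argue by contradiction. Suppose $\beta_{(n,m)} = 0$ while $(n+1,m-1) \in \Gamma(A_{\mathfrak{b}}(\lambda))$. By irreducibility of $A_{\mathfrak{b}}(\lambda)$ and the Poincar\'e--Birkhoff--Witt theorem,
\[
A_{\mathfrak{b}}(\lambda) = U(\ka) \cdot \Span_{\mathbb{C}}\{F_4^a F_3^b E_4^c E_3^d \cdot v_n \boxtimes v_m : a, b, c, d \in \mathbb{N}_0\}.
\]
The overall strategy is to reduce this spanning set by eliminating the $E_4$-factors (the analogue of eliminating $E_3$ in Lemma \ref{E3}) and then derive a contradiction from a weight count.

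The first and main step is the reduction. Applying $E_2$ to the identity $E_4 \cdot v_n \boxtimes v_m = \alpha_{(n,m)} F_2 \cdot v_{n+1} \boxtimes v_{m+1} + \beta_{(n,m)} v_{n+1} \boxtimes v_{m-1}$ and using $[E_2,E_4] = E_3$ and $[E_2,F_2] = H_1 - H_2$ yields the auxiliary identity $\mu_{(n,m)} = (m+1)\alpha_{(n,m)}$. In particular, by Lemma \ref{E3} we have $\alpha_{(n,m)} \neq 0$ and every scalar $\mu_{(n,m)} + d\alpha_{(n,m)} = (m+1+d)\alpha_{(n,m)}$ is nonzero for $d \in \mathbb{N}_0$. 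Since $\beta_{(n,m)} = 0$ is equivalent to $E_4 \cdot v_n \boxtimes v_m = (\alpha_{(n,m)}/\mu_{(n,m)}) F_2 E_3 \cdot v_n \boxtimes v_m$, I would prove by induction on $c$, using the commutation relations $[E_4,F_2] = 0$, $[E_3,F_2] = -E_4$, $[E_3,E_4] = 2E_1$ together with $E_1 \cdot v_n \boxtimes v_m = 0$, the identity
\[
E_4^c E_3^d \cdot v_n \boxtimes v_m = \Big(\prod_{i=0}^{c-1} \frac{1}{m+1+d+i}\Big) F_2^c E_3^{c+d} \cdot v_n \boxtimes v_m.
\]
Since $F_2 \in \ka$, commuting $F_2^c$ to the left of $F_4^a F_3^b$ via $[F_2,F_3] = 0$ and $[F_2,F_4] = -F_3$ produces only $U(\ka)$-multiples of monomials in $F_4, F_3, E_3$, with no new $E_4$-factors appearing. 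Therefore
\[
A_{\mathfrak{b}}(\lambda) = U(\ka) \cdot \Span_{\mathbb{C}}\{F_4^a F_3^b E_3^d \cdot v_n \boxtimes v_m : a, b, d \in \mathbb{N}_0\}.
\]

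The second step is the weight-counting argument, carried out exactly as in Lemma \ref{E3}. The weight of $F_4^a F_3^b E_3^d \cdot v_n \boxtimes v_m$ equals $(n + d - a - b,\, m + d + a - b)$. Since $(n+1, m-1) \in \Gamma(A_{\mathfrak{b}}(\lambda))$, the highest-weight vector $v_{n+1} \boxtimes v_{m-1}$ lies in the reduced spanning set, which forces the system
\[
n + 1 = n + d - a - b, \qquad m - 1 = m + d + a - b
\]
to admit a solution with $a, b, d \in \mathbb{N}_0$. Subtracting the two equations yields $2a = -2$, hence $a = -1$, contradicting $a \in \mathbb{N}_0$.

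I expect the main obstacle to be the inductive reduction identity for $E_4^c E_3^d \cdot v_n \boxtimes v_m$: one must carefully track the commutators and verify at each stage that the denominators $m+1+d+i$ are indeed nonzero (which is ensured by the relation $\mu_{(n,m)} = (m+1)\alpha_{(n,m)}$ derived above). The final weight comparison is then a direct analogue of the calculation in Lemma \ref{E3}.
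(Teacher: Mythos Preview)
Your argument is correct but takes a genuinely different route from the paper's. You eliminate $E_4$ from the PBW spanning set by proving the explicit identity
\[
E_4^{\,c} E_3^{\,d}\cdot v_n\boxtimes v_m \;=\; \Big(\prod_{i=0}^{c-1}\frac{1}{m+1+d+i}\Big)\, F_2^{\,c} E_3^{\,c+d}\cdot v_n\boxtimes v_m,
\]
which follows from $\beta_{(n,m)}=0$ together with the auxiliary relation $\mu_{(n,m)}=(m+1)\alpha_{(n,m)}$; since $F_2\in\ka$ commutes past $F_4^{\,a}F_3^{\,b}$ producing only $U(\ka)$-combinations of monomials in $F_4,F_3$, you reduce to $A_{\mathfrak b}(\lambda)=U(\ka)\cdot\Span\{F_4^{\,a}F_3^{\,b}E_3^{\,d}\,v\}$ and finish with the weight count of Lemma~\ref{E3}. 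The paper instead works at the level of $K$-types: it first uses $E_4=[F_3,E_1]$ to show that $\beta_{(n,m)}=0$ forces $\delta_{(n,m)}=0$, so that $\p\cdot V_n\boxtimes V_m$ misses the $K$-type $(n+1,m-1)$; it then propagates $\beta=\delta=0$ along the diagonal $\{(n+j,m+j)\}$ and concludes that $\Span\{F_4^{\,a}F_3^{\,b}E_4^{\,c}E_3^{\,d}\}\cdot V_n\boxtimes V_m\subset\bigoplus_{k-l\le n-m}V_k\boxtimes V_l$, contradicting $(n+1)-(m-1)>n-m$. Your approach is more computational and economical---one inductive identity replaces the propagation argument---while the paper's is more structural and yields the contradiction directly in terms of $K$-types rather than weights. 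One small caveat: your last step (that the \emph{highest} weight $(n+1,m-1)$ must be hit by some $F_4^{\,a}F_3^{\,b}E_3^{\,d}v$) relies on the same shortcut the paper uses in Lemma~\ref{E3}; the paper's own proof of Lemma~\ref{E4} sidesteps this by staying at the $K$-type level.
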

\begin{proof}
Let us assume that the $K$--type $(n + 1, m - 1)$ belongs to $A_{\mathfrak{b}}(\lambda)$ and $\beta_{(n, m)} = 0$. From that assumption it follows that $\alpha_{(n, m)} \neq 0$. Otherwise we would have  $E_4 \cdot v_n \boxtimes v_m = 0$ and then $E_3 \cdot v_n \boxtimes v_m = 0$, since $E_3 = [E_2, E_4]$, which contradicts the previous lemma. Furthermore we have 
\begin{align*}
E_4 \cdot v_n \boxtimes v_m & = [F_3, E_1] \cdot v_n \boxtimes v_m = - E_1 F_3 \cdot v_n \boxtimes v_m \\
& = - E_1 (\gamma_{(n , m)} F_1 F_2 \cdot v_{n + 1} \boxtimes v_{m + 1} + \delta_{(n, m)} F_1 \cdot v_{n + 1} \boxtimes v_{m - 1} \\
& + \epsilon_{(n, m)} F_2 \cdot v_{n - 1} \boxtimes v_{m + 1} + \eta_{(n, m)} v_{n - 1} \boxtimes v_{m - 1}) \\
& = - \gamma_{(n , m)} E_1 F_1 F_2 \cdot v_{n + 1} \boxtimes v_{m + 1} - \delta_{(n, m)} E_1 F_1 \cdot v_{n + 1} \boxtimes v_{m - 1},
\end{align*}
since $E_1 \cdot v_{n - 1} = 0$. We assumed that $\beta(n, m) = 0$. Then $E_4 \cdot v_n \boxtimes v_m  = \alpha_{(n , m)} F_2 \cdot v_{n + 1} \boxtimes v_{m + 1}$ from where it follows $\delta_{(n, m)} = 0$ and then 
$$
\p \cdot V_n \boxtimes V_m \subset (V_{n + 1} \boxtimes V_{m + 1}) \oplus (V_{n - 1} \boxtimes V_{m + 1}) \oplus (V_{n - 1} \boxtimes V_{m - 1}).
$$
Since the $K$--type $(n + 1, m + 1)$ is in $A_{\mathfrak{b}}(\lambda)$, we have   
\begin{equation}\label{E4n1m1}
E_4 \cdot v_{n + 1} \boxtimes v_{m + 1}  = \alpha_{(n + 1 , m + 1)} F_2 \cdot v_{n + 2} \boxtimes v_{m + 2} + \beta_{(n + 1, m + 1)} v_{n + 2} \boxtimes v_{m}. 
\end{equation}
If the $K$--type $(n - 1, m - 1)$ is in $A_{\mathfrak{b}}(\lambda)$, then we have  
$$
E_4 \cdot v_{n - 1} \boxtimes v_{m - 1}  = \alpha_{(n - 1 , m - 1)} F_2 \cdot v_{n} \boxtimes v_{m} + \beta_{(n - 1, m - 1)} v_{n} \boxtimes v_{m - 2}.
$$
Furthermore, we have
\begin{align*}
E_4 \cdot v_{n + 1} \boxtimes v_{m + 1} & = E_4 \left ( \frac{1}{\mu_{(n, m)}} E_3 \cdot v_n \boxtimes v_m \right ) \\
& =  \frac{1}{\mu_{(n, m)}} \left ( E_3 E_4 - 2 E_1 \right ) \cdot v_n \boxtimes v_m  \\
& =  \frac{\alpha_{(n , m)}}{\mu_{(n, m)}} \left ( E_3 F_2 \right )  \cdot v_{n + 1} \boxtimes v_{m + 1}  \\
& =  \frac{\alpha_{(n , m)}}{\mu_{(n, m)}} \left ( - E_4 + F_2 E_3  \right ) \cdot v_{n + 1} \boxtimes v_{m + 1}, 
\end{align*}
from where it follows
$$
\left ( 1 + \frac{\alpha_{(n , m)}}{\mu_{(n, m)}} \right ) E_4 \cdot v_{n + 1} \boxtimes v_{m + 1} = \frac{\alpha_{(n , m)}}{\mu_{(n, m)}} \mu_{(n + 1, m + 1)} F_2 \cdot v_{n + 2} \boxtimes v_{m + 2}.
$$
Since $\alpha_{(n , m)} \neq 0$, $\mu_{(n ,m)} \neq 0$, $\mu_{(n + 1, m + 1)} \neq 0$ and $F_2 \cdot v_{n + 2} \boxtimes v_{m + 2} \neq 0$, then from the equation above it follows that $1 + \frac{\alpha_{(n , m)}}{\mu_{(n, m)}} \neq 0$. From here and \eqref{E4n1m1} we get $\beta_{(n + 1, m + 1)} = 0$. Similar calculation shows that if the $K$--type $(n - 1, m - 1)$ is in $A_{\mathfrak{b}}(\lambda)$, then $\beta_{(n - 1, m - 1)} = 0$. Since $\beta_{(n + 1, m + 1)} = \beta_{(n - 1, m - 1)} = 0$, then 
$$
\delta_{(n + 1, m + 1)} = \delta_{(n - 1, m - 1)} = 0.
$$
From the previous conclusions we get 
$$
\Span_{\mathbb{C}} \{ F_{4}^{a} F_{3}^{b} E_{4}^{c} E_{3}^{d} \, | \, a, b, c, d \in \mathbb{N}_{0} \} \cdot V_n \boxtimes V_m \subset \bigcup_{\substack{(k, l) \in \Gamma(A_{\mathfrak{b}}(\lambda)) \\ k - l \leq n - m}} V_{k} \boxtimes V_l.
$$
This contradicts the assumption $(n + 1, m - 1) \in \Gamma(A_{\mathfrak{b}}(\lambda))$.
\end{proof}
\begin{theorem}\label{basis1}
The set 
\begin{align}\label{firstbasis}
\{ F_{1}^{a} F_{2}^{b} E_{3}^{k} E_{4}^{l} \cdot v_{\lambda_1 + \lambda_2  + 2} \boxtimes v_{\lambda_1 - \lambda_2} \, | \, & k \in \mathbb{N}_{0}, l \in \{ 0, 1, \cdots, \lambda_1 - \lambda_2 \}, \notag \\
& a \in \{ 0, 1, \cdots , \lambda_1 + \lambda_2  + 2 + k + l \}, \notag \\
& b \in \{ 0, 1, \cdots , \lambda_1 - \lambda_2 + k - l \} \}
\end{align}
is a basis of the discrete series representation $A_{\mathfrak{b}}(\lambda)$.
\end{theorem}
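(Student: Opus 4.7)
The plan is to prove the set $\mathcal{S}$ in~\eqref{firstbasis} is a basis of $A_{\mathfrak{b}}(\lambda)$ by establishing spanning and concluding linear independence from a level-by-level cardinality count. Write $v := v_{m_0} \boxtimes v_{n_0}$ and $w_{k, l} := v_{m_0 + k + l} \boxtimes v_{n_0 + k - l}$, with $m_0 = \lambda_1 + \lambda_2 + 2$ and $n_0 = \lambda_1 - \lambda_2$. Decompose $A_{\mathfrak{b}}(\lambda) = \bigoplus_{n \geq 0} A_{\mathfrak{b}}(\lambda)^{(n)}$ by level, where $A_{\mathfrak{b}}(\lambda)^{(n)} := \bigoplus_{k + l = n,\, 0 \leq l \leq n_0} V_{m_0 + k + l} \boxtimes V_{n_0 + k - l}$ is precisely the $\ka_1$-weight-$(m_0 + n)$ subspace of $A_{\mathfrak{b}}(\lambda)$.

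The central step is the decomposition
$$
E_3^{k} E_4^{l} v = c_0^{(k, l)} w_{k, l} + \sum_{j = 1}^{l} c_j^{(k, l)} F_2^{j} w_{k + j,\, l - j}, \qquad c_0^{(k, l)} \neq 0.
$$
Since $[E_1, E_3] = [E_1, E_4] = 0$ and $E_1 v = 0$, the vector $E_3^{k} E_4^{l} v$ is $\ka_1$-highest with $\frt$-weight $(m_0 + k + l,\, n_0 + k - l)$; a direct weight analysis confines its support in $A_{\mathfrak{b}}(\lambda)$ to the $K$-types $(k + j, l - j)$ for $0 \leq j \leq l$, each contributing at the one-dimensional space spanned by $F_2^{j} w_{k + j,\, l - j}$. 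For the nonvanishing of $c_0^{(k, l)}$ I first use $[E_3, E_4] = 2 E_1$ together with $E_1 E_3^i v = 0$ to obtain $E_3^{k} E_4^{l} v = E_4^{l} E_3^{k} v$. Iterating Lemma~\ref{E3} gives $E_3^{k} v = \mu^{(k)} w_{k, 0}$ with $\mu^{(k)} = \prod_{i = 0}^{k - 1} \mu_{(m_0 + i,\, n_0 + i)} \neq 0$. Finally, iterating $E_4 w_{k', l'} = \alpha F_2 w_{k' + 1,\, l'} + \beta w_{k',\, l' + 1}$ together with $[E_4, F_2] = 0$ yields the coefficient of $w_{k, l}$ in $E_4^{l} w_{k, 0}$ as $\prod_{j = 0}^{l - 1} \beta_{(m_0 + k + j,\, n_0 + k - j)}$ via the ``all-$\beta$'' path; each factor is nonzero by Lemma~\ref{E4} since the $K$-type $(k, j + 1)$ with $0 \leq j \leq l - 1 < n_0$ lies in $\Gamma(A_{\mathfrak{b}}(\lambda))$.

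With the decomposition in hand, a descending induction on $l$ at fixed level $n = k + l$ shows that $F_1^{a} F_2^{b} w_{k, l}$, for $0 \leq a \leq m_0 + k + l$ and $0 \leq b \leq n_0 + k - l$, lies in the span of $\mathcal{S}$: the base case $l = 0$ gives $w_{k, 0} = (\mu^{(k)})^{-1} E_3^{k} v$, and the inductive step uses
$$
F_1^{a} F_2^{b} w_{k, l} = \frac{1}{c_0^{(k, l)}} \bigg( F_1^{a} F_2^{b} E_3^{k} E_4^{l} v - \sum_{j = 1}^{l} c_j^{(k, l)} F_1^{a} F_2^{b + j} w_{k + j,\, l - j} \bigg),
$$
where the inequality $b + j \leq n_0 + k - l + j \leq n_0 + (k + j) - (l - j)$ keeps every index within the range defining $\mathcal{S}$. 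Since $\{F_1^{a} F_2^{b} w_{k, l}\}$ is the standard basis of $V_{m_0 + k + l} \boxtimes V_{n_0 + k - l}$, summing over $(k, l) \in \Gamma(A_{\mathfrak{b}}(\lambda))$ shows $\mathcal{S}$ spans $A_{\mathfrak{b}}(\lambda)$. Finally, the cardinality of $\mathcal{S}$ at level $n$ is $(m_0 + n + 1) \sum_{l = 0}^{\min(n, n_0)} (n_0 + n - 2l + 1) = \dim A_{\mathfrak{b}}(\lambda)^{(n)}$, and every level-$n$ element of $\mathcal{S}$ lies in $A_{\mathfrak{b}}(\lambda)^{(n)}$ (since $E_3^{n - l} E_4^{l} v$ has $\ka_1$-weight $m_0 + n$ and $F_1, F_2 \in \ka$ preserve $K$-types); spanning combined with this match yields linear independence per level, so $\mathcal{S}$ is a basis. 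The main obstacle is the nonvanishing of $c_0^{(k, l)}$; the reordering $E_3^{k} E_4^{l} v = E_4^{l} E_3^{k} v$ together with the all-$\beta$ path argument are the key reductions that make Lemmas~\ref{E3} and~\ref{E4} directly applicable, bypassing the more intricate $[E_3, F_2] = -E_4$ commutator corrections one would otherwise encounter.
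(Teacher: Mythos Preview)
Your proof is correct and follows essentially the same strategy as the paper's: establish spanning via Lemmas~\ref{E3} and~\ref{E4} and conclude linear independence from a dimension count, with your level-by-level decomposition playing the role of the paper's cumulative filtration $Z_t$ (your explicit ``all-$\beta$ path'' computation and the reordering $E_3^k E_4^l v = E_4^l E_3^k v$ simply make precise what the paper summarizes as ``it easily follows''). One minor slip: $A_{\mathfrak{b}}(\lambda)^{(n)}$ is the $\ka_1$-isotypic component of highest weight $m_0+n$, not the $\ka_1$-weight-$(m_0+n)$ subspace, but since your actual argument uses the correct fact that $F_1, F_2 \in \ka$ preserve $K$-types, this does not affect validity.
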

\begin{proof}
Let us introduce the following notation
\begin{align*}
S_{t} = \{ F_{1}^{a} F_{2}^{b} E_{3}^{k} E_{4}^{l} \cdot v_{\lambda_1 + \lambda_2  + 2} \boxtimes v_{\lambda_1 - \lambda_2} \, | \, & k \in \mathbb{N}_{0}, l \in \{ 0, 1, \cdots, \lambda_1 - \lambda_2 \}, k + l \leq t, \notag \\
& a \in \{ 0, 1, \cdots , \lambda_1 + \lambda_2  + 2 + k + l \}, \notag \\
& b \in \{ 0, 1, \cdots , \lambda_1 - \lambda_2 + k - l \} \},
\end{align*}
for $t \in \mathbb{N}_{0}$. From lemma \ref{E3} and lemma \ref{E4} it easily follows that the set $S_t$ is a spanning set of the vector space
\begin{equation}\label{Zt1}
Z_{t} = \bigoplus_{\substack{(\lambda_1 + \lambda_2 + 2 + r, y) \in \Gamma(A_{\mathfrak{b}}(\lambda)), \\ r \leq t}} V_{\lambda_1 + \lambda_2 + 2 + r} \boxtimes V_{y}.
\end{equation}
It is easy to see that
$$
Z_t =  \bigoplus_{r = 0}^{t} V_{m_0 + r} \boxtimes (V_{n_0 + r} \oplus V_{n_0 + r - 2} \oplus \cdots \oplus V_{|n_0 - r|}), 
$$
where $m_0 = \lambda_1 + \lambda_2 + 2$ and $n_0 = \lambda_1 - \lambda_2$. The dimension of that space is
\begin{align*}
\sum_{r = 0}^{t}& (m_0 + r + 1)(\text{dim}V_{n_0 + r} + \text{dim}V_{n_0 + r - 2} + \cdots + \text{dim}V_{|n_0 - r|}) \\
& = \sum_{r = 0}^{t}(m_0 + r + 1)(r + 1)(n_0 + 1) \\
& = \frac{1}{6} (n_0 + 1) (3 m_0 + 2 t + 3) (t^2 + 3t + 2),
\end{align*}
and the cardinality of the set $S_t$ is 
$$
\sum_{l = 0}^{n_0} \sum_{k = 0}^{t - l} (m_0 + k + l + 1)(n_0 + k - l + 1) = \frac{1}{6} (n_0 + 1) (3 m_0 + 2 t + 3) (t^2 + 3t + 2).
$$
From this it follows that $S_t$ is a basis of $Z_{t}$ and hence, the set (\ref{firstbasis}) is a basis of the discrete series $A_{\mathfrak{b}}(\lambda)$.
\end{proof}

\section{Dirac cohomology and the induced module}
The algebra $\mathcal{A}^{K}$ is generated by the Casimir elements for $\ka_1$ and $\ka_2$, by one generator from $C(\p)^{K}$, by the Dirac operator 
$$
D =  E_3 \otimes F_3 + E_4 \otimes F_4 + F_3 \otimes E_3 + F_4 \otimes E_4,
$$
and by the $\ka$--Dirac operator 
\begin{align}\label{k-Dirac}
D_{\ka} & = E_1 \otimes \alpha(2 F_1) + E_2 \otimes \alpha(2 F_2) + F_1 \otimes \alpha(2 E_1) + F_2 \otimes \alpha(2 E_2) \notag \\
& + (H_1 - H_2) \otimes \alpha(H_1 - H_2) + (H_1 + H_2) \otimes \alpha(H_1 + H_2).
\end{align}
For more details about the structure of the algebra $\mathcal{A}^{K}$ see \cite{Pr2}. The spin module is 
$$
S = \text{span} \{ 1, E_3, E_4, E_3 \wedge E_4 \}.
$$
The $C(\p)$--module $S$  can be viewed as a $\ka$--module (see \cite[Section~2.]{HP2}). We will denote by $V_{(n, m)}$ the irreducible finite dimensional $\ka$--module with highest weight $(n, m)$ such that $H_1$ acts by $n$ on the highest weight vector, while $H_2$ acts by $m$ on the highest weight vector. As a $\ka$--module $S$ is equal to
$$
S = V_{(\frac{1}{2},\frac{1}{2})} \oplus V_{(\frac{1}{2},- \frac{1}{2})},
$$
where 
\begin{align*}
V_{(\frac{1}{2},\frac{1}{2})} & \simeq V_{1} \boxtimes V_{0}  = \text{span} \{ E_3 \wedge E_4, 1 \} \\
V_{(\frac{1}{2}, - \frac{1}{2})} & \simeq V_{0} \boxtimes V_{1}  = \text{span} \{ E_3, E_4 \}.
\end{align*}
Since the lowest $K$--type of the discrete series representation $A_{\mathfrak{b}}(\lambda)$ is $$V_{\lambda_1 + \lambda_2 + 2} \boxtimes V_{\lambda_1 - \lambda_2} \simeq V_{(\lambda_1 + 1, \lambda_2 + 1)},$$ then by \cite[Corollary~7.4.5.]{HP2} it follows that the Dirac cohomology $H^{D}(A_{\mathfrak{b}}(\lambda))$ is a single $\tilde{K}$--type with the highest weight equal to $(\lambda_1 + 1, \lambda_2 + 1) - \rho_n$, where $\rho_n = \rho - \rho_{\ka} = \frac{1}{2}(1, 1)$ is the half sum of the noncompact positive roots. Therefore, the Dirac cohomology of 
 $A_{\mathfrak{b}}(\lambda)$ is the $\tilde{K}$--type $V_{(\lambda_1 + \frac{1}{2}, \lambda_2 + \frac{1}{2})}$, from where it follows that the Dirac cohomology is contained in the tensor product of the lowest $K$--type of $A_{\mathfrak{b}}(\lambda)$ and the $K$--type $V_{(\frac{1}{2},\frac{1}{2})} \simeq V_{1} \boxtimes V_{0} = \text{span} \{ E_3 \wedge E_4, 1 \}$ in $S$. Now, using the fact that the highest weight vector of $H^{D}(A_{\mathfrak{b}}(\lambda))$ is annihilated by the elements $(E_{1})_{\Delta}$ and $(E_{2})_{\Delta}$, it is easy to check that $H^{D}(A_{\mathfrak{b}}(\lambda))$ is the $\tilde{K}$--type with the highest weight vector equal to
$$
w_{(\lambda_1 + \frac{1}{2}, \lambda_2 + \frac{1}{2})} = (v_{\lambda_1 + \lambda_2} \boxtimes v_{\lambda_1 - \lambda_2}) \otimes E_{3} \wedge E_{4} + 2 (\lambda_1 + \lambda_2 + 2) (v_{\lambda_1 + \lambda_2 + 2} \boxtimes v_{\lambda_1 - \lambda_2}) \otimes 1.
$$
The Dirac cohomology is 
$$
H^{D}(A_{\mathfrak{b}}(\lambda)) = \text{span} \{ w_{(\lambda_1 + \frac{1}{2} - s, \lambda_2 + \frac{1}{2} - t)} \, | \, s \in \{ 0, 1, \cdots, \lambda_1 + \lambda_2 + 1 \}, t \in \{ 0, 1, \cdots, \lambda_1 - \lambda_2\}\}, 
$$
where 
\begin{align*}
w_{(\lambda_1 + \frac{1}{2} - s,  \lambda_2 + \frac{1}{2} - t)} & = (F_{1})_{\Delta}^{s} (F_{2})_{\Delta}^{t}  w_{(\lambda_1 + \frac{1}{2}, \lambda_2 + \frac{1}{2})} \\
& = (v_{\lambda_1 + \lambda_2 - 2s} \boxtimes v_{\lambda_1 - \lambda_2 - 2t}) \otimes E_{3} \wedge E_{4} \\
& + 2 (\lambda_1 + \lambda_2 + 2 - s) (v_{\lambda_1 + \lambda_2 + 2 - 2s} \boxtimes v_{\lambda_1 - \lambda_2 - 2t}) \otimes 1.
\end{align*}
Let $x_s$ denote $\lambda_1 + \frac{1}{2} - s$ and let $y_t$ denote $\lambda_2 + \frac{1}{2} - t$. We describe a spanning set of $(\mathcal{A}, \tilde{K})$--module $\text{Ind}_{D}(W) = \mathcal{A} \otimes_{\mathcal{B}} W $, where $W = H^{D}(A_{\mathfrak{b}}(\lambda))$. Let us denote
$$
Z = \text{span}_{\mathbb{C}} \{ ab \otimes w - a \otimes bw \, | \, a \in \mathcal{A}, b \in \mathcal{B}, w \in W \}.
$$
We are going to reduce the obvious spanning set of $\mathcal{A} \otimes_{\mathcal{B}}W$ given by
$$
\{ a \otimes w + Z \, | \, a \in \mathcal{A}, w \in W \}.
$$
As in \cite{Pr1}, it is easy to show that we can ``remove" $U(\ka)$. For $x \in \mathfrak{k}$, $y \in C(\mathfrak{p})$ and $w \in W$ we have
\begin{align*}
(x \otimes y) \otimes w & =\left((1 \otimes y) \cdot (x_{\Delta} - 1 \otimes \alpha(x)) \right) \otimes w = \\
& = ((1 \otimes y) \cdot x_{\Delta}) \otimes w - (1 \otimes y \cdot \alpha(x)) \otimes w \\
& = \underbrace{((1 \otimes y) \cdot x_{\Delta}) \otimes w - (1 \otimes y) \otimes x_{\Delta}w}_{\in Z} + (1 \otimes y) \otimes x_{\Delta}w \\
& - (1 \otimes y \cdot \alpha(x)) \otimes w \\
& \in \left(1 \otimes C(\mathfrak{p}) \right) \otimes W + Z.
\end{align*}
From here it follows:
\begin{equation}\label{elimk}
\left(U(\mathfrak{k}) \otimes C(\mathfrak{p})\right) \otimes W \subset \left( 1 \otimes C(\mathfrak{p}) \right) \otimes W + Z.
\end{equation}
Now, we are going to describe the structure of the algebra $U(\g)$ using the results from \cite{Pr2}. The next proposition and its proof are similar to \cite[Proposition~4.2.]{Pr1} and its proof, but we recall it for the reader convenience.
\begin{prop}\label{ug}
We have
\begin{align}\label{Ug}
U(\g) = \Span_{\C} \{ x(E_3 F_3 + E_4 F_4)^{t} y \, | \, & x \in \{(\text{ad}F_1)^a (\text{ad}F_2)^b (E_{3}^{k}) \, | \, k \in \mathbb{N}_{0}, a, b \in \{0, \cdots, k \} ,  \\ 
& t \in \mathbb{N}_{0}, y \in U(\ka) \}. \notag 
\end{align}
\end{prop}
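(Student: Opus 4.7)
The plan is to pass to the PBW associated graded $\gr U(\g) \cong S(\g) = S(\p) \otimes S(\ka)$ and show that the principal symbols of the proposed spanning set already span $S(\g)$; a standard filtration argument then lifts the conclusion back to $U(\g)$. Concretely, given $u \in U_n(\g) \setminus U_{n-1}(\g)$, if I can match its leading symbol by a combination of elements $x (E_3 F_3 + E_4 F_4)^{t} y$ from the proposed set, then subtracting this combination drops the PBW-filtration degree and I finish by induction on $n$.

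The core reduction is the statement that the symbols in $S(\p)$ of the elements $(\ad F_1)^a (\ad F_2)^b (E_3^k) \cdot (E_3 F_3 + E_4 F_4)^{t}$, for $k,t \in \mathbb{N}_0$ and $0 \leq a,b \leq k$, already span all of $S(\p)$. Since $[\ka,\p] \subset \p$, the operators $\ad F_1, \ad F_2$ preserve the $\p$-degree, so their principal symbols on $S(\p)$ are the derivations induced by the $\ka$-action on $\p \cong V_1 \boxtimes V_1$. The symbol of $E_3 F_3 + E_4 F_4$ in $S^2(\p)$ is (up to a scalar) the unique $\ka$-invariant quadratic form on $\p$, because $S^2(V_1 \boxtimes V_1) = (V_2 \boxtimes V_2) \oplus (V_0 \boxtimes V_0)$. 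The classical $\mathfrak{so}(4,\C)$-harmonic decomposition then gives
\begin{equation*}
S^n(\p) = \bigoplus_{t=0}^{\lfloor n/2 \rfloor} (E_3 F_3 + E_4 F_4)^{t} \cdot H^{n-2t}, \qquad H^m \cong V_m \boxtimes V_m,
\end{equation*}
where the harmonic summand $H^m$ is the $\ka$-irreducible generated by the highest-weight vector $E_3^m$. Because $\ad F_1$ and $\ad F_2$ are the simple lowering operators for $\ka_1$ and $\ka_2$ respectively, the $(m+1)^2$ vectors $(\ad F_1)^a (\ad F_2)^b (E_3^m)$, $0 \leq a, b \leq m$, form a weight basis of $H^m$. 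Setting $m = k$ and summing over $k,t$ recovers all of $S(\p)$.

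Combined with the fact that the symbol map sends $U(\ka)$ onto $S(\ka)$ (by PBW applied to $\ka$) and that $S(\g) = S(\p) \otimes S(\ka)$, the leading symbols of the set $\{x (E_3 F_3 + E_4 F_4)^{t} y\}$ span all of $\gr U(\g)$, and the filtration induction outlined above concludes the proof. The main technical point to check carefully is the symbol computation for $(\ad F_1)^a (\ad F_2)^b (E_3^k)$: since $F_1,F_2 \in \ka$ and $\ad$ preserves the $\p$-degree, expanding by the Leibniz rule and passing to $S(\p)$ yields exactly the corresponding element of $H^k \subset S^k(\p)$, with all corrections coming from non-commutativity of $\g$ landing in strictly lower PBW filtration and therefore absorbed by the inductive hypothesis.
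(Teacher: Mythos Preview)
Your argument is correct and follows essentially the same strategy as the paper: induction along the PBW filtration together with the $\ka\cong\mathfrak{so}(4)$-harmonic decomposition $S^n(\p)=\bigoplus_t (E_3F_3+E_4F_4)^t H^{n-2t}$, $H^m\cong V_m\boxtimes V_m$, which is exactly the paper's input $S^n(\p)=V_{(n,0)}\oplus (E_3F_3+E_4F_4)S^{n-2}(\p)$ iterated. The only difference is packaging: the paper carries out the induction by explicitly rewriting PBW monomials $E_3^pE_4^qF_3^rF_4^s$ and commuting $E_3F_3+E_4F_4$ past them, whereas you pass once and for all to $\gr U(\g)=S(\p)\otimes S(\ka)$ and read off the spanning directly, which is a cleaner but equivalent route.
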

\begin{proof}
Let $T$ be the right side of \eqref{Ug}. We will show by induction that $U_{l}(\g) \subset T$ for all $l \in \mathbb{N}_{0}$. The claim is obvious for $l = 0$ and $l = 1$. Let us assume that the claim is true for $l \in \mathbb{N}_{0}$ less than or equal to some fixed $t \geq 1$. By the Poincare-Birkhoff-Witt's theorem, it is enough to prove that elements of the form $E_{3}^{p} E_{4}^{q} F_{3}^{r} F_{4}^{s}$, $p,q,r,s \in \mathbb{N}_{0}, p+q+r+s = t+1$ are in $T$. From \cite[p.~278]{Pr2} it follows that
$$
S^{n}(\mathfrak{p}) = V_{(n, 0)} \oplus (E_3 F_3 + E_4 F_4) S^{n-2}(\mathfrak{p}),
$$
for all $n \in \mathbb{N}, n \geq 2$, where $V_{(n, 0)}$ is the irreducible $\mathfrak{k}$--module with the highest weight vector $E_{3}^{n}$ and the highest weight $(n, 0)$ (more precisely $H_1$ acts by $n$ and $H_2$ acts by zero on $E_{3}^{n}$). Now we have
\begin{align*}
E_{3}^{p} E_{4}^{q} F_{3}^{r} F_{4}^{s}  \in & \text{ (span}_{\C}\{ (\text{ad}F_1)^a (\text{ad}F_2)^b (E_{3}^{t + 1}) \, | \, k \in \mathbb{N}_{0}, a, b \in \ \{0, \cdots, t + 1 \} \} \} \\
\oplus  & \text{ span}_{\C} \{ E_{3}^{m+1} E_{4}^{n} F_{3}^{r+1} F_{4}^{s} + E_{3}^{m} E_{4}^{n+1} F_{3}^{r} F_{4}^{s+1} \, | \, m, n, r, s \in \mathbb{N}_{0}, \\
& m + n + r + s = t-1 \}\text{)} \oplus U_{t}(\g).
\end{align*}
Therefore, it is enough to show that
$
E_{3}^{m+1} E_{4}^{n} F_{3}^{r+1} F_{4}^{s} + E_{3}^{m} E_{4}^{n+1} F_{3}^{r} F_{4}^{s+1} \in T$ for $m, n, r, s \in \mathbb{N}_{0}$, $m + n + r + s = t-1$.
By induction on $r$ and $s$ it can be easily seen that
\begin{align*}
E_{3}^{m+1} E_{4}^{n} F_{3}^{r+1} F_{4}^{s} + E_{3}^{m} E_{4}^{n+1} F_{3}^{r} F_{4}^{s+1}  & \in E_{3}^{m} E_{4}^{n} F_{3}^{r} F_{4}^{s}(E_3 F_3 + E_4 F_4) + U_{t}(\g) \\
& \subset U_{t-1}(\g)(E_3 F_3 + E_4 F_4) + U_{t}(\g),
\end{align*}
and the proposition follows.
\end{proof}
\bigskip
Furthermore, we have
\begin{equation}\label{connection}
(E_3 F_3 + E_4 F_4) \otimes 1 = - \frac{1}{2}D^2 + D_{\ka} + (H_1 + H_2). 
\end{equation}
which implies that
$$
((E_3 F_3 + E_4 F_4) \otimes 1) \otimes w \in \left( U(\mathfrak{k}) \otimes C(\mathfrak{p}) \right) \otimes W + Z.
$$
By \eqref{elimk}, proposition \ref{ug} and \eqref{connection} we see that the vector space $\mathcal{A} \otimes_{\mathcal{B}} W$ is spanned by the elements of the form $(x \otimes y) \otimes w$ where

\begin{equation}\label{ug_reduced}
x \in \{ (\text{ad}F_1)^{a} (\text{ad}F_2)^{b} E_{3}^{k} \,| \, k \in \mathbb{N}_{0}, a, b \in \{ 0, 1, \cdots, k\} \}, \, y \in C(\mathfrak{p}), \, w \in W.
\end{equation}
%\begin{remark}
%From \eqref{connection}, \eqref{k-Dirac}, and from the identity
%\begin{align*}
%(H_1 + H_2) \otimes 1 & = (H_1 + H_2)_{\Delta} - 1 \otimes \alpha(H_1 + H_2) \\
%& = (H_1 + H_2)_{\Delta} - 1 \otimes (-\frac{1}{2}(E_3 F_3 + E_4 F_4) - 1) \in \mathcal{B},
%\end{align*}
%it follows that $(E_1 F_1 + E_2 F_2) \otimes 1 \in \mathcal{B}$.
%\end{remark}
The next step is to reduce a part of the algebra $C(\p)$.
\begin{lemma}\label{E3F4}
If $T \in \End_{\mathbb{C}}(S) \cong C(\mathfrak{p})$ is any linear operator such that
$$
T(1) = T(E_3 \wedge E_4) = 0,
$$
then $T = T p^{'}$, where $p^{'}$ is the projection on $\Span_{\mathbb{C}} \{ E_3, E_4 \}$.
\end{lemma}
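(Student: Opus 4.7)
The plan is to verify the identity $T = Tp'$ by checking that both sides agree on a basis of $S$, since every linear operator is determined by its action on a basis. The natural basis to use is $\{1, E_3, E_4, E_3 \wedge E_4\}$, which was already introduced in the excerpt as a basis of the spin module.

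First I would observe the decomposition $S = V_{(\frac{1}{2},\frac{1}{2})} \oplus V_{(\frac{1}{2}, -\frac{1}{2})}$, where $V_{(\frac{1}{2},\frac{1}{2})} = \Span\{1, E_3 \wedge E_4\}$ and $V_{(\frac{1}{2}, -\frac{1}{2})} = \Span\{E_3, E_4\}$. By construction, $p'$ is the projection onto the second summand along the first, i.e., $p'$ is the identity on $V_{(\frac{1}{2}, -\frac{1}{2})}$ and zero on $V_{(\frac{1}{2},\frac{1}{2})}$.

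Next I would check the identity $T(v) = T(p'(v))$ on each basis vector. For $v = E_3$ and $v = E_4$, we have $p'(v) = v$, so the equality is immediate. For $v = 1$ and $v = E_3 \wedge E_4$, we have $p'(v) = 0$, so $T(p'(v)) = 0$, while $T(v) = 0$ by the hypothesis. Extending linearly yields $T = Tp'$, and there is no real obstacle here: the lemma is a direct consequence of the two basis elements $1$ and $E_3 \wedge E_4$ spanning the kernel of $p'$, so that the hypothesis that $T$ kills both of them says precisely that $T$ annihilates $\ker p'$, which is equivalent to $T$ factoring through $p'$.
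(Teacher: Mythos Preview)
Your proof is correct and follows essentially the same approach as the paper: both verify $T = Tp'$ by checking the identity on the basis $\{1, E_3, E_4, E_3 \wedge E_4\}$, using that $p'$ is the identity on $\Span\{E_3,E_4\}$ and zero on $\Span\{1, E_3\wedge E_4\}$, together with the hypothesis $T(1)=T(E_3\wedge E_4)=0$.
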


\proof
For $s \in \text{span}_{\mathbb{C}} \{ E_3, E_4 \}$ we have
$p'(s) = s$ and then $Tp'(s) = T(s)$. Furthermore, $T(1) = T(E_3 \wedge E_4) = 0 = T p'(1) = T p'(E_3 \wedge E_4)$. Since $$S =  \text{span}_{\mathbb{C}}\{1, E_3, E_4, E_3 \wedge E_4 \}$$ it follows that $T = T p'$.
\qed

\bigskip

In the spin module $S$, the following identities hold
\begin{align*}
E_3 F_4 \cdot 1 & = 0, \quad & E_3 F_4 \cdot E_3 \wedge E_4 & = E_3 \cdot (2 E_3) = 0, \\
E_4 F_3 \cdot 1 & = 0, \quad & E_4 F_3 \cdot E_3 \wedge E_4 & = E_4 \cdot (-2 E_4) = 0, \\
(E_3 F_3 - E_4 F_4) \cdot 1 & = 0, \quad & (E_3 F_3 - E_4 F_4) \cdot E_3 \wedge E_4 & = E_3 \cdot (-2 E_4) - E_4 \cdot (2 E_3) = 0.
\end{align*}
By lemma \ref{E3F4}, we have $E_3 F_4 p' = E_3 F_4$, $E_4 F_3 p' = E_4 F_3$ and $(E_3 F_3 - E_4 F_4) p' = (E_3 F_3 - E_4 F_4)$. Since $(1 \otimes p')w = 0$ for $w \in W$, we get that for any $a \in \mathcal{A}$ and \newline $s \in \text{span}_{\mathbb{C}}\{ E_3 F_4, E_4 F_3, (E_3 F_3 - E_4 F_4) \}$ 
\begin{align*}
\left( a(1 \otimes s) \right) \otimes w & = (a(1 \otimes sp')) \otimes w  = (a(1 \otimes s)(1 \otimes p')) \otimes w \\
& = (a (1 \otimes s)(1 \otimes p')) \otimes w - (a(1 \otimes s)) \otimes \underbrace{(1 \otimes p')w}_{0} \in Z.
\end{align*}
It follows that
\begin{align}\label{EF}
& (U(\mathfrak{g}) \otimes C(\mathfrak{p})E_3 F_4) \otimes W \subset Z \notag \\ 
& (U(\mathfrak{g}) \otimes C(\mathfrak{p})E_4 F_3) \otimes W \subset Z \notag \\
& (U(\mathfrak{g}) \otimes C(\mathfrak{p})(E_3 F_3 - E_4 F_4)) \otimes W \subset Z.
\end{align}
From \eqref{elimk}, \eqref{ug_reduced}, \eqref{EF} and
$$
(\text{ad}F_{i})^{a} E_{3}^{k} \in \Span \{ F_{i}^{m} E_{3}^{k} F_{i}^{n} \, | \, m + n = a\}, \quad i = 1, 2
$$
it follows that the vector space $\mathcal{A} \otimes_{\mathcal{B}} W$ is spanned by the set
\begin{align}\label{cp_reduced}
\{ (x \otimes y) \otimes w \, | \,  & x \in \{ F_{1}^{a}F_{2}^{b} E_{3}^{k} \, | \, k \in \mathbb{N}_0, a, b \in \{ 0, \cdots, k \} \}, \\
& y \in \{1,  E_3, E_4, F_3, F_4, E_3 E_4, E_3 F_3, F_3 F_4 \}, w \in W \}. \notag
\end{align}
\bigskip
Furthermore, one can easily check that
\begin{align}\label{anticom1}
D \cdot (1 \otimes E_3 F_4) - (1 \otimes E_3 F_4) \cdot D & = -2(E_3 \otimes F_4 - F_4 \otimes E_3) ; \\
D \cdot (1 \otimes E_4 F_3) - (1 \otimes E_4 F_3) \cdot D & = -2(E_4 \otimes F_3 - F_3 \otimes E_4) \notag ; \\
D \cdot (1 \otimes (E_3 F_3 - E_4 F_4)) - (1 \otimes (E_3 F_3 - E_4 F_4)) \cdot D & = -2 D + 4 (E_4 \otimes F_4 + F_3 \otimes E_3) \notag ; \\
D - (E_4 \otimes F_4 + F_3 \otimes E_3) & = (E_3 \otimes F_3 + F_4 \otimes E_4). \notag
\end{align} 
From \eqref{anticom1}, \eqref{EF} and $D \otimes W \subset Z$ we get
\begin{align}\label{anticom}
(E_3 \otimes F_4 - F_4 \otimes E_3) \otimes w \in Z  \\ 
(E_4 \otimes F_3 - F_3 \otimes E_4) \otimes w \in Z \notag \\
(E_4 \otimes F_4 + F_3 \otimes E_3) \otimes w \in Z \notag \\
(E_3 \otimes F_3 + F_4 \otimes E_4) \otimes w \in Z. \notag
\end{align}

\begin{prop}\label{F4}
For $s \in \{0, 1, 2, \cdots, \lambda_1 + \lambda_2 \}$ and $t \in \{0, 1, 2, \cdots, \lambda_1 - \lambda_2 \}$ we have
$$(1 \otimes F_4) \otimes (\lambda_1 + \lambda_2 + 1 - s) w_{(x_s, y_t)} \in (1 \otimes E_3) \otimes (F_1)_{\Delta}w_{(x_s, y_t)} + Z.$$
\end{prop}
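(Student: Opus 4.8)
The plan is to transform the right-hand side into the left-hand side by a chain of reductions modulo $Z$; the only non-formal input is the explicit shape of the Dirac cohomology vectors $w_{(x_s,y_t)}$ together with the identity $D\,w_{(x_s,y_t)}=0$.

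\emph{Step 1: push $(F_1)_\Delta$ to the left.} For $s\le\lambda_1+\lambda_2$ we have $(F_1)_\Delta w_{(x_s,y_t)}=w_{(x_{s+1},y_t)}$, which is nonzero, and since $(F_1)_\Delta\in U(\ka_\Delta)\subset\mathcal B$ the definition of $Z$ gives $(1\otimes E_3)\otimes(F_1)_\Delta w_{(x_s,y_t)}\equiv\bigl((1\otimes E_3)(F_1)_\Delta\bigr)\otimes w_{(x_s,y_t)}\pmod Z$. In $\mathcal A$ one has $(1\otimes E_3)(F_1)_\Delta=F_1\otimes E_3+1\otimes E_3\,\alpha(F_1)$, and $\alpha(F_1)$ is $\frt$-homogeneous of weight $-\epsilon_1-\epsilon_2$, so it is a scalar multiple of $F_3F_4$, the unique element of $C(\p)$ of that weight. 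Rewriting $E_3F_3$ through the Clifford relation and using $\eqref{EF}$ (which puts $(1\otimes F_3E_3F_4)\otimes w$ in $Z$), the term $(1\otimes E_3F_3F_4)\otimes w_{(x_s,y_t)}$ collapses modulo $Z$ to a scalar multiple of $(1\otimes F_4)\otimes w_{(x_s,y_t)}$. Hence the proposition is equivalent to a congruence $(F_1\otimes E_3)\otimes w_{(x_s,y_t)}\equiv\kappa\,(1\otimes F_4)\otimes w_{(x_s,y_t)}\pmod Z$ for an explicit scalar $\kappa$ (which will have to work out to $\lambda_1+\lambda_2+1-s$ up to the correction just bookkept).

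\emph{Step 2: eliminate $F_1\otimes E_3$.} This is the heart of the matter, and here one cannot get away with $\ka_\Delta$-conjugation alone (re-expanding $F_1\otimes E_3$ by $(F_1)_\Delta$ merely undoes Step 1). I would start from the anticommutator relation $(E_3\otimes F_4-F_4\otimes E_3)\otimes w\in Z$ of $\eqref{anticom}$, left-multiply by a suitable element of $1\otimes C(\p)$ so as to create the needed factors in the $U(\g)$- and $C(\p)$-slots, and then invoke relation $\eqref{connection}$ and the fact that $\mathcal I$, hence $D$ and $D^2$, annihilate $W=H^D(A_{\mathfrak b}(\lambda))$. The genuinely new input is obtained by substituting $w_{(x_s,y_t)}=(v_{\lambda_1+\lambda_2-2s}\boxtimes v_{\lambda_1-\lambda_2-2t})\otimes E_3\wedge E_4+2(\lambda_1+\lambda_2+2-s)(v_{\lambda_1+\lambda_2+2-2s}\boxtimes v_{\lambda_1-\lambda_2-2t})\otimes 1$ and using $D\,w_{(x_s,y_t)}=0$: separating the $E_3$- and $E_4$-isotypic parts of $S$ yields in $A_{\mathfrak b}(\lambda)$ the relation $E_3\cdot(v_{\lambda_1+\lambda_2-2s}\boxtimes v_{\lambda_1-\lambda_2-2t})=(\lambda_1+\lambda_2+2-s)\,F_4\cdot(v_{\lambda_1+\lambda_2+2-2s}\boxtimes v_{\lambda_1-\lambda_2-2t})$, and it is exactly this identity that pins $\kappa$ down and produces the coefficient $\lambda_1+\lambda_2+1-s$.

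\emph{Main obstacle and loose ends.} The whole difficulty is concentrated in Step 2: every purely formal move with $\ka_\Delta$ is circular, so one must feed in the two concrete facts about $W$ — the two-term form of $w_{(x_s,y_t)}$ with its specific coefficient $2(\lambda_1+\lambda_2+2-s)$, and $\mathcal I\,W=0$ — and then track all Clifford signs and the normalization of $\alpha(F_1)$ carefully enough that the spurious terms cancel and $\kappa$ comes out to be exactly $\lambda_1+\lambda_2+1-s$. A reassuring consistency check is that applying the evaluation map $a\otimes w\mapsto a\cdot w\in A_{\mathfrak b}(\lambda)\otimes S$ to the two sides of the claimed congruence gives literally the same vector $2(\lambda_1+\lambda_2+1-s)(v_{\lambda_1+\lambda_2-2s}\boxtimes v_{\lambda_1-\lambda_2-2t})\otimes E_3$, although this does not by itself establish the congruence in $\mathcal A\otimes_{\mathcal B}W$. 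The only remaining thing to verify is the degenerate cases (for instance $\lambda_1=\lambda_2$, or $s$ or $t$ at the ends of their ranges), where some of the weight vectors $v_{(\cdots)}$ vanish; there the corresponding terms simply drop out, so the argument goes through unchanged.
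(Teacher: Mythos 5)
Your Step 1 is essentially the paper's opening move: commuting $(F_1)_\Delta$ past $1\otimes E_3$, absorbing the $1\otimes E_3\alpha(F_1)$ term via \eqref{EF}, and thereby reducing the proposition to a congruence of the form $(F_1\otimes E_3)\otimes w_{(x_s,y_t)}\equiv \kappa\,(1\otimes F_4)\otimes w_{(x_s,y_t)}\pmod Z$. The gap is in Step 2, which you rightly call the heart of the matter but do not actually carry out. The mechanism you propose for pinning down $\kappa$ — substituting the explicit two-term expression of $w_{(x_s,y_t)}$ as an element of $A_{\mathfrak b}(\lambda)\otimes S$, separating isotypic components of $S$, and extracting a relation such as $E_3\cdot u=(\lambda_1+\lambda_2+2-s)\,F_4\cdot u'$ in $A_{\mathfrak b}(\lambda)$ — is information about the image of $W$ under the action map $\mathcal A\otimes W\to X\otimes S$, not about the quotient $\mathcal A\otimes_{\mathcal B}W$. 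A relation between $a\cdot w$ and $a'\cdot w$ in $X\otimes S$ does not yield $a\otimes w\equiv a'\otimes w\pmod Z$; you concede exactly this when you call the evaluation computation only a ``consistency check.'' Since that computation is the only place your argument produces the coefficient $\lambda_1+\lambda_2+1-s$, the congruence is never established.

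The step that is actually needed works entirely inside $\mathcal A\otimes_{\mathcal B}W$ and uses only elements of $\mathcal B$ acting on $W$: write $F_1=-\tfrac12[F_3,F_4]$ in $U(\frg)$, so $(F_1\otimes E_3)\otimes w=-\tfrac12\bigl((F_3F_4-F_4F_3)\otimes E_3\bigr)\otimes w$; use the swap relations \eqref{anticom} (left multiplication by $F_3\otimes 1$ and $F_4\otimes1$, not by $1\otimes C(\p)$ as you suggest) to convert this to $-\tfrac12\bigl((F_3E_3+F_4E_4)\otimes F_4\bigr)\otimes w$; then invoke the $\mathcal A^K$-identity $(F_3E_3+F_4E_4)\otimes1=-\tfrac12D^2+D_{\ka}-(H_1+H_2)\otimes 1$ obtained from \eqref{connection}. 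The scalar $\kappa=\lambda_1+\lambda_2+2-s$ then comes from $D^2w=0$, from $D_{\ka}$ acting on $W$ by $-\lambda_1-\lambda_2-4$, and from the $(H_1+H_2)_\Delta$-weight $\lambda_1+\lambda_2+1-2s$ of $w_{(x_s,y_t)}$ — that is, from eigenvalues of elements of $\mathcal B=U(\ka_\Delta)\mathcal A^K$ on the abstract module $W$, which is legitimate modulo $Z$. Your proposal never computes the $D_{\ka}$-eigenvalue (indeed it is exactly the ingredient that forces tensoring over the larger algebra $\mathcal B$ rather than over $U(\ka_\Delta)(C(\p)^K+\mathcal I)$), so the essential input of the proof is missing.
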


\proof
For $w \in W$ we have
\begin{equation}\label{1o}
(F_1)_{\Delta}(1 \otimes E_3) \otimes w 
 \in (1 \otimes E_3) \otimes (F_1)_{\Delta}w + (1 \otimes F_4) \otimes w + Z.
\end{equation}
Furthermore, using \eqref{EF} we get
\begin{equation}\label{2o}
(F_1)_{\Delta}(1 \otimes E_3) \otimes w  
\in (F_1 \otimes E_3) \otimes w + Z.
\end{equation}
From (\ref{anticom}), and $D \otimes W \subset Z$ we get
\begin{align}\label{3o}
(F_1 \otimes E_3) \otimes w & = - \frac{1}{2}([F_3, F_4] \otimes E_3) \otimes w \\
& = - \frac{1}{2}[(F_3 F_4 \otimes E_3) \otimes w - (F_4 F_3 \otimes E_3) \otimes w] \notag \\
& \in - \frac{1}{2}[(F_3 E_3 \otimes F_4) \otimes w + (F_4 E_4 \otimes F_4) \otimes w] + Z \notag \\
& = -\frac{1}{2}[(F_3 E_3 + F_4 E_4) \otimes F_4 \otimes w] + Z. \notag
\end{align}
From \eqref{connection}, we get 
\begin{equation}\label{conn}
(F_3 E_3 + F_4 E_4) \otimes 1 = - \frac{1}{2}D^2 + D_{\ka} - (H_1 + H_2). 
\end{equation}
A strightforward calculation shows that $D_{\ka}$ acts on $W$ by $(-\lambda_1 - \lambda_2 - 4)$. The proof follows from \eqref{1o}, \eqref{2o}, \eqref{3o}, \eqref{conn} and from
\begin{align*}
((H_1 + H_2) \otimes F_4) \otimes w_{(x_s, y_t)} 
& = (1 \otimes F_4) ((H_1 + H_2)_{\Delta} \otimes w_{(x_s, y_t)} \\
& - 1 \otimes \alpha(H_1 + H_2) \otimes w_{(x_s, y_t)}) \\
& = (\lambda_1 + \lambda_2 - 2s)(1 \otimes F_4) \otimes w_{(x_s, y_t)} + Z.
\end{align*}
\qed

\begin{cor}\label{F}
For $s \in \{ 1,2, \cdots, \lambda_1 + \lambda_2\}$ and $t \in \{0, 1, 2, \cdots, \lambda_1 - \lambda_2 \}$ we have
\begin{align*}
&(1 \otimes E_4 F_4)  \otimes (\lambda_1 + \lambda_2 + 1 - s) w_{(x_s, y_t)} 
 \in (1 \otimes E_4 E_3) \otimes (F_1)_{\Delta}w_{(x_s, y_t)} + Z, \\
& (1 \otimes F_3 F_4)  \otimes (\lambda_1 + \lambda_2 + 1 - s) w_{(x_s, y_t)} 
 \in (1 \otimes F_3 E_3) \otimes (F_1)_{\Delta}w_{(x_s, y_t)} + Z, \\
& (1 \otimes F_3)  \otimes (\lambda_1 + \lambda_2 + 1 - s) w_{(x_S, y_t)} 
 \in -(1 \otimes E_4) \otimes (F_1)_{\Delta}w_{(x_s, y_t)} + Z.
\end{align*}
\end{cor}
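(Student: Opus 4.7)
My plan is to prove the three identities of Corollary \ref{F} in two stages. Identities (i) and (ii) follow directly from Proposition \ref{F4} by left-multiplication, while identity (iii) requires an independent but parallel derivation.

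For (i) and (ii), I first observe that $Z$ is a left $\mathcal{A}$-submodule of $\mathcal{A} \otimes W$, since $\mathcal{A} \otimes_\mathcal{B} W$ is naturally a left $\mathcal{A}$-module. Therefore, left-multiplying the containment in Proposition \ref{F4} by $(1 \otimes E_4)$ immediately yields (i), since $(1 \otimes E_4)(1 \otimes F_4) = 1 \otimes E_4 F_4$ and $(1 \otimes E_4)(1 \otimes E_3) = 1 \otimes E_4 E_3$. Multiplying the same containment by $(1 \otimes F_3)$ gives (ii).

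For (iii), I mimic the proof of Proposition \ref{F4} with $(1 \otimes E_4)$ in place of $(1 \otimes E_3)$, writing $w = w_{(x_s, y_t)}$ for brevity. Since $(F_1)_\Delta \in \mathcal{B}$, I can move it across $\otimes$ while picking up the commutator $[(F_1)_\Delta, 1 \otimes E_4] = 1 \otimes [\alpha(F_1), E_4] = 1 \otimes [F_1, E_4] = -(1 \otimes F_3)$ (the minus sign coming from the commutator table), yielding
\begin{equation*}
(F_1)_\Delta (1 \otimes E_4) \otimes w \equiv (1 \otimes E_4) \otimes (F_1)_\Delta w - (1 \otimes F_3) \otimes w \pmod{Z}.
\end{equation*}
On the other hand, expanding $(F_1)_\Delta = F_1 \otimes 1 + 1 \otimes \alpha(F_1)$, a short Clifford algebra computation (using $[\alpha(F_1), E_4] = -F_3$ together with the relations $x y + y x = -2 B(x, y)$ in $C(\mathfrak{p})$) shows that $\alpha(F_1) E_4 \in C(\mathfrak{p}) \cdot E_4 F_3$, so by \eqref{EF} we have $(F_1)_\Delta (1 \otimes E_4) \otimes w \equiv (F_1 \otimes E_4) \otimes w \pmod{Z}$. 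Equating the two expressions isolates $(1 \otimes F_3) \otimes w$ in terms of $(F_1 \otimes E_4) \otimes w$ and $(1 \otimes E_4) \otimes (F_1)_\Delta w$.

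It remains to reduce $(F_1 \otimes E_4) \otimes w$. Writing $F_1 = -\tfrac{1}{2}[F_3, F_4]$ and applying the anticommutation relations $(E_3 \otimes F_3 + F_4 \otimes E_4) \otimes w \in Z$ and $(E_4 \otimes F_3 - F_3 \otimes E_4) \otimes w \in Z$ from \eqref{anticom}, I obtain $(F_1 \otimes E_4) \otimes w \equiv \tfrac{1}{2}\bigl((F_3 E_3 + F_4 E_4) \otimes F_3\bigr) \otimes w \pmod{Z}$. Finally, I rewrite $(F_3 E_3 + F_4 E_4) \otimes 1$ via \eqref{conn} as $-\tfrac{1}{2} D^2 + D_\kappa - (H_1 + H_2)$ and use the scalar action of $D_\kappa - (H_1 + H_2)$ on $w_{(x_s, y_t)}$ --- precisely the same scalar computation performed at the end of the proof of Proposition \ref{F4}, which yields the factor $(\lambda_1 + \lambda_2 + 1 - s)$. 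The main obstacle is careful sign-and-scalar tracking in this last step: the minus sign on the right-hand side of (iii) emerges from the combined effect of $[\alpha(F_1), E_4] = -F_3$, the differing pattern of anticommutation substitutions relative to the $E_3$ case, and the overall $-\tfrac{1}{2}$ from expressing $F_1$ as a commutator, and all of these must align with the $D_\kappa - (H_1 + H_2)$ eigenvalue on $w_{(x_s, y_t)}$ to produce exactly the stated coefficient.
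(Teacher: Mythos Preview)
Your argument is correct. For (i) and (ii) you do exactly what the paper does: left-multiply the containment of Proposition~\ref{F4} by $1\otimes E_4$ and $1\otimes F_3$, using that $Z$ is a left $\mathcal{A}$-submodule.

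For (iii) your route diverges from the paper's. You rerun the entire proof of Proposition~\ref{F4} with $E_4$ in place of $E_3$: commute $(F_1)_\Delta$ past $1\otimes E_4$, reduce $(F_1\otimes E_4)\otimes w$ via $F_1=-\tfrac12[F_3,F_4]$ and the relations \eqref{anticom} to $\tfrac12\bigl((F_3E_3+F_4E_4)\otimes F_3\bigr)\otimes w$, and then invoke \eqref{conn} together with the scalar action of $D_\kappa$ and $(H_1+H_2)\otimes 1$. This works --- the $(H_1+H_2)\otimes F_3$ computation indeed produces the same scalar $(\lambda_1+\lambda_2-2s)$ as the $(H_1+H_2)\otimes F_4$ case, since $F_3$ and $F_4$ have the same $(H_1+H_2)$-weight and the residual Clifford terms land in $Z$ by \eqref{EF} symmetrically. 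The paper, however, takes a shorter path: it left-multiplies Proposition~\ref{F4} by $1\otimes E_4F_3$, obtaining
\[
(1\otimes E_4F_3F_4)\otimes(\lambda_1+\lambda_2+1-s)\,w_{(x_s,y_t)}\in(1\otimes E_4F_3E_3)\otimes(F_1)_\Delta w_{(x_s,y_t)}+Z,
\]
and then uses the Clifford identities $E_4F_3E_3=-2E_4+E_3E_4F_3$ and $E_4F_3F_4=2F_3+F_4E_4F_3$ together with $(U(\mathfrak g)\otimes C(\mathfrak p)E_4F_3)\otimes W\subset Z$ from \eqref{EF} to strip off the unwanted terms and arrive at (iii) directly. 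This avoids repeating the $D_\kappa$ and $(H_1+H_2)$ eigenvalue computations entirely. Your approach has the virtue of being self-contained and parallel to Proposition~\ref{F4}; the paper's buys brevity by recycling Proposition~\ref{F4} one more time rather than re-deriving it.
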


\proof
The first two claims follow from the previous lemma. Furthermore, we have
\begin{align*}
& (1 \otimes E_4 F_3 F_4)  \otimes (\lambda_1 + \lambda_2 + 1 - s) w_{(x_s, y_t)} \\
& \in (1 \otimes E_4 F_3 E_3) \otimes (F_1)_{\Delta}w_{(x_s, y_t)} + Z \\
\end{align*}
The proof follows from $E_4 F_3 E_3 = - 2 E_4 + E_3 E_4 F_3$, $E_4 F_3 F_4 = 2 F_3 + F_4 E_4 F_3$ and $(U(\mathfrak{g}) \otimes C(\mathfrak{p})E_4 F_3) \otimes W \subset Z$.
\qed

\bigskip

From Proposition \ref{F4}, Corollary \ref{F} and from the previous conclusions it follows that the vector space $\mathcal{A} \otimes_{\mathcal{B}} W$ is spanned by the set
\begin{align*}
\{ & (x \otimes E_3) \otimes w_{(x_s, y_t)}, (x \otimes E_4) \otimes w_{(x_s, y_t)}, \\
& (x \otimes 1) \otimes w_{(x_s, y_t)}, (x \otimes E_3 E_4) \otimes w_{(x_s, y_t)}, \\
& (x \otimes F_3) \otimes w_{(x_{\lambda_1 + \lambda_2 + 1}, y_t)}, (x \otimes F_4) \otimes w_{(x_{\lambda_1 + \lambda_2 + 1}, y_t)}, \\
& (x \otimes E_3 F_3) \otimes w_{(x_{\lambda_1 + \lambda_2 + 1}, y_t)}, (x \otimes F_3 F_4) \otimes w_{(x_{\lambda_1 + \lambda_2 + 1}, y_t)}, \\
& x \in \{ F_{1}^{a} F_{2}^{b} E_{3}^{k} \, | \, k \in \mathbb{N}_{0}, a, b \in \{ 0, 1, \cdots, k \} \}, s \in \{ 0, 1, \cdots, \lambda_1 + \lambda_2 + 1 \}, t \in \{ 0, 1, \cdots, \lambda_1 - \lambda_2 \} \}.
\end{align*}

\begin{prop}\label{l1}
For $s \in \{0, 1, \cdots, \lambda_1 + \lambda_2 \}$, $t \in \{0, 1, \cdots, \lambda_1 - \lambda_2 \}$ and for any nonnegative integer $k$ we have
\begin{align*}
(F_1 E_{3}^{k} \otimes E_3) \otimes w_{(x_s, y_t)} \in \Span_{\mathbb{C}}\{ (E_{3}^{k - 2} \otimes E_3) \otimes (E_2)_{\Delta} w_{(x_s, y_t)}, \\
(E_{3}^{k} \otimes E_3) \otimes (F_1)_{\Delta} w_{(x_s, y_t)} \} + Z,
\end{align*}
where $E_{3}^{-1} = E_{3}^{-2} = 0$.
\end{prop}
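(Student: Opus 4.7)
The plan is to transfer $F_1$ from the first tensor factor into an action on $w$ via $(F_1)_\Delta\in\mathcal{B}$, and to clean up the resulting commutator and Clifford-algebra corrections using Proposition~\ref{F4} together with the $Z$-relations \eqref{EF} and \eqref{anticom}. First, a short induction using $[F_1,E_3]=F_4$, $[F_4,E_3]=-2E_2$ and $[E_2,E_3]=0$ yields the Leibniz-type identity
\[
F_1E_3^k \;=\; E_3^kF_1 \;+\; k\,E_3^{k-1}F_4 \;-\; k(k-1)\,E_3^{k-2}E_2,
\]
which splits $(F_1E_3^k\otimes E_3)\otimes w$ into three pieces.

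For the first piece I would use $F_1\otimes 1=(F_1)_\Delta-1\otimes\alpha(F_1)$ and move $(F_1)_\Delta\in\mathcal{B}$ across the tensor product, obtaining
\[
(E_3^kF_1\otimes E_3)\otimes w \;\equiv\; (E_3^k\otimes E_3)\otimes(F_1)_\Delta w \;-\; (E_3^k\otimes E_3\,\alpha(F_1))\otimes w\pmod{Z}.
\]
The third piece is treated analogously with $(E_2)_\Delta\in\mathcal{B}$. For the middle piece, left-multiplying the relation $(E_3\otimes F_4-F_4\otimes E_3)\otimes w\in Z$ from \eqref{anticom} by $E_3^{k-1}\otimes 1\in\mathcal{A}$ yields $(E_3^{k-1}F_4\otimes E_3)\otimes w\equiv(E_3^k\otimes F_4)\otimes w\pmod{Z}$, and then Proposition~\ref{F4} (left-multiplied by $E_3^k\otimes 1$) converts this into $\frac{1}{\lambda_1+\lambda_2+1-s}(E_3^k\otimes E_3)\otimes(F_1)_\Delta w_{(x_s,y_t)}$; the hypothesis $s\le\lambda_1+\lambda_2$ is used precisely to make this denominator nonzero.

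What remains, and what I expect to be the main obstacle, is to show that the two Clifford corrections $(E_3^k\otimes E_3\,\alpha(F_1))\otimes w$ and $(E_3^{k-2}\otimes E_3\,\alpha(E_2))\otimes w$ also land in the target span modulo $Z$. Computing $\alpha(F_1)$ and $\alpha(E_2)$ from the characterizing identity $[\alpha(X),y]=[X,y]$ for $y\in\p\subset C(\p)$, with the normalization $B(E_3,F_3)=B(E_4,F_4)=1$ forced by the given Clifford action on $S$, gives the explicit expressions $\alpha(F_1)=\frac{1}{2}F_3F_4$ and $\alpha(E_2)=-\frac{1}{2}E_3F_4$. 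The $\alpha(E_2)$ correction then vanishes outright because $E_3^2=0$ in $C(\p)$.

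The $\alpha(F_1)$ correction is handled via the identity
\[
E_3F_3F_4 \;=\; F_4(E_3F_3-E_4F_4)\;-\;2F_4
\]
in $C(\p)$: the first summand places $(E_3F_3-E_4F_4)$ on the right of the Clifford factor, so its contribution is absorbed into $Z$ by \eqref{EF}, while the residual $-2F_4$ reduces to a multiple of $(E_3^k\otimes E_3)\otimes(F_1)_\Delta w_{(x_s,y_t)}$ exactly as in the middle piece above. Collecting the coefficients produced by the three pieces yields the claimed membership, with the hypothesis $s\le\lambda_1+\lambda_2$ entering exactly at the application of Proposition~\ref{F4}.
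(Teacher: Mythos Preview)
Your proof is correct and follows essentially the same route as the paper's: both start from the commutator identity $F_1E_3^k=E_3^kF_1+kE_3^{k-1}F_4-k(k-1)E_3^{k-2}E_2$, move $E_2$ and $F_1$ across via $(E_2)_\Delta,(F_1)_\Delta\in\mathcal{B}$ (these are the paper's \eqref{j2} and \eqref{j3}), and convert the $F_4$ term via \eqref{anticom} and Proposition~\ref{F4} (the paper's \eqref{j4}). The only cosmetic difference is in how the Clifford correction from $\alpha(F_1)=\tfrac12 F_3F_4$ is absorbed: the paper notes that $\alpha(F_1)E_3=-\tfrac12 F_3E_3F_4\in C(\frp)E_3F_4$ and invokes \eqref{EF} directly (yielding \eqref{2o}), whereas you instead rewrite $E_3\alpha(F_1)=\tfrac12 E_3F_3F_4=\tfrac12 F_4(E_3F_3-E_4F_4)-F_4$ and then apply \eqref{EF} and Proposition~\ref{F4} once more; the resulting coefficient $\frac{\lambda_1+\lambda_2+2-s}{\lambda_1+\lambda_2+1-s}$ agrees with the paper's \eqref{j3}.
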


\proof
One can easily show by induction that for each $k \in \mathbb{N}$
\begin{equation}\label{j1}
F_1 E_{3}^{k} = -k(k-1) E_{3}^{k - 2} E_2 + k E_{3}^{k - 1} F_4 + E_{3}^{k} F_1.
\end{equation}
Furthermore, using $\alpha(E_2) = - \frac{1}{2} E_3 F_4$ and $\alpha(F_1) = \frac{1}{2} F_3 F_4$ and previous conclusions we get
\begin{equation}\label{j2} 
(E_2 \otimes E_3) \otimes w_{(x_s, y_t)}  \in (1 \otimes E_3) \otimes (E_{2})_{\Delta} w_{(x_s, y_t)} + Z, 
\end{equation}
and
\begin{equation}\label{j3}
(F_1 \otimes E_3) \otimes w_{(x_s, y_t)} \in \frac{\lambda_1 + \lambda_2 + 2 - s}{\lambda_1 + \lambda_2 + 1 - s}(1 \otimes E_3) \otimes (F_{1})_{\Delta} w_{(x_s, y_t)} + Z.
\end{equation}
From (\ref{anticom}) and Proposition (\ref{F4}) we get
\begin{align}\label{j4}
& (F_4 \otimes E_3) \otimes w_{(x_s, y_t)} = (E_3 \otimes F_4) \otimes w_{(x_s, y_t)} \\
& \in \frac{1}{\lambda_1 + \lambda_2 + 1 - s}(E_3 \otimes E_3) \otimes (F_{1})_{\Delta} w_{(x_s, y_t)} + Z. \notag 
\end{align}
The proof follows from \eqref{j1}, \eqref{j2}, \eqref{j3} and (\ref{j4}).
\qed

\begin{prop}\label{l2}
For any $t \in \{0, 1, \cdots, \lambda_1 - \lambda_2 \}$ and for any nonnegative integer $k$ we have
\begin{align*}
 & (F_1 E_{3}^{k} \otimes E_3) \otimes w_{(x_{\lambda_1 + \lambda_2 + 1}, y_t)} \\
 & \in \Span_{\mathbb{C}}\{ (E_{3}^{k - 2} \otimes E_3) \otimes (E_2)_{\Delta} w_{(x_{\lambda_1 + \lambda_2 + 1}, y_t)}, 
 (E_{3}^{k} \otimes F_4) \otimes w_{(x_{\lambda_1 + \lambda_2 + 1}, y_t)} \} + Z,
\end{align*}
where $E_{3}^{-1} = E_{3}^{-2} = 0$.
\end{prop}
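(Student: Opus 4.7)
The plan is to follow the proof of Proposition \ref{l1} almost verbatim, except that at the boundary $s = \lambda_1+\lambda_2+1$ the coefficients $1/(\lambda_1+\lambda_2+1-s)$ appearing in \eqref{j3} and \eqref{j4} are singular, so I must replace those reductions by ones that exploit the vanishing $(F_1)_\Delta w = 0$, where throughout I write $w := w_{(x_{\lambda_1+\lambda_2+1}, y_t)}$.

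First I would apply \eqref{j1}, $F_1 E_3^k = -k(k-1) E_3^{k-2} E_2 + k E_3^{k-1} F_4 + E_3^k F_1$, to split the left-hand side into three summands. The $E_2$ summand is handled exactly as in Proposition \ref{l1}: left-multiplying \eqref{j2} by $E_3^{k-2} \otimes 1$ gives $-k(k-1)(E_3^{k-2} \otimes E_3) \otimes (E_2)_\Delta w + Z$. The $F_4$ summand uses the skew-commutation $(F_4 \otimes E_3 - E_3 \otimes F_4) \otimes w \in Z$ from \eqref{anticom} and left multiplication by $E_3^{k-1} \otimes 1$, yielding $k(E_3^k \otimes F_4) \otimes w + Z$. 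Both already lie in the asserted span.

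The main obstacle is the $F_1$ summand $(E_3^k F_1 \otimes E_3) \otimes w$. The key observation is that $w$ is the $(F_1)_\Delta$-lowest vector of the $\tilde{K}$-type $V_{(\lambda_1+\frac{1}{2},\lambda_2+\frac{1}{2})}$---applying $(F_1)_\Delta$ once more would push $s$ past its maximum $\lambda_1+\lambda_2+1$---so $(F_1)_\Delta w = 0$. Using $\alpha(F_1) = \frac{1}{2} F_3 F_4$ and the bracket $[\alpha(F_1), E_3] = [F_1, E_3] = F_4$ read off the commutator table, I would rewrite $(F_1)_\Delta (1 \otimes E_3) = (1 \otimes E_3)(F_1)_\Delta + 1 \otimes F_4$ in $\mathcal{A}$; pulling $(F_1)_\Delta$ across the tensor product by $\mathcal{B}$-linearity and substituting $(F_1)_\Delta w = 0$, then subtracting the $\alpha(F_1) E_3 = \tfrac{1}{2} F_3 F_4 E_3$ term and applying the Clifford identity $F_3 F_4 E_3 = 2 F_4 + E_3 F_3 F_4$, I reach the compact form $(F_1 \otimes E_3) \otimes w = -\tfrac{1}{2}(1 \otimes E_3 F_3 F_4) \otimes w$.

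Finally I would invoke the Clifford identity $E_3 F_3 F_4 = F_4(E_3 F_3 - E_4 F_4) - 2 F_4$, an easy consequence of $F_4^2 = 0$ and the anticommutation relations on $\mathfrak{p}$. Combined with \eqref{EF} this yields $(1 \otimes E_3 F_3 F_4) \otimes w \equiv -2(1 \otimes F_4) \otimes w \pmod{Z}$, whence $(F_1 \otimes E_3) \otimes w \equiv (1 \otimes F_4) \otimes w \pmod{Z}$. Left multiplication by $E_3^k \otimes 1$ produces $(E_3^k F_1 \otimes E_3) \otimes w \equiv (E_3^k \otimes F_4) \otimes w \pmod{Z}$, and summing the three contributions places $(F_1 E_3^k \otimes E_3) \otimes w$ in $\Span_{\mathbb{C}}\{(E_3^{k-2} \otimes E_3) \otimes (E_2)_\Delta w,\ (E_3^k \otimes F_4) \otimes w\} + Z$, as required.
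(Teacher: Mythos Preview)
Your proof is correct and follows essentially the same route as the paper. The paper's proof simply cites \eqref{j1}, \eqref{j2}, the relation $(F_4 \otimes E_3)\otimes w \equiv (E_3\otimes F_4)\otimes w$ from \eqref{anticom}, and the key boundary relation $(F_1\otimes E_3)\otimes w_{(x_{\lambda_1+\lambda_2+1},y_t)} \in (1\otimes F_4)\otimes w_{(x_{\lambda_1+\lambda_2+1},y_t)}+Z$ without rederiving it; your argument supplies a detailed derivation of that last relation via Clifford identities and \eqref{EF}, which amounts to the same content as combining \eqref{1o} and \eqref{2o} from the proof of Proposition~\ref{F4} with $(F_1)_\Delta w_{(x_{\lambda_1+\lambda_2+1},y_t)}=0$.
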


\begin{proof}
The proof follows from \eqref{j1}, \eqref{j2} and from
\begin{align*}
& (F_1 \otimes E_3) \otimes w_{(x_{\lambda_1 + \lambda_2 + 1}, y_t)} \in (1 \otimes F_4) \otimes w_{(x_{\lambda_1 + \lambda_2 + 1}, y_t)} + Z \\
& (F_4 \otimes E_3) \otimes w_{(x_{\lambda_1 + \lambda_2 + 1}, y_t)} = (E_3 \otimes F_4) \otimes w_{(x_{\lambda_1 + \lambda_2 + 1}, y_t)}.
\end{align*}
\end{proof}
Finally, from propositions \ref{l1} and \ref{l2} and previous conclusions we get
\begin{theorem}\label{si}
One spanning set of the vector space $\mathcal{A} \otimes_{\mathcal{B}} W$ is given by
\begin{align*}
\{ & (F_{2}^{c} E_{3}^{k} \otimes E_3) \otimes w_{(x_s, y_t)}, (F_{2}^{c} E_{3}^{k} \otimes E_4) \otimes w_{(x_s, y_t)}, \\
& (F_{2}^{c} E_{3}^{k} \otimes 1) \otimes w_{(x_s, y_t)}, (F_{2}^{c} E_{3}^{k} \otimes E_3 E_4) \otimes w_{(x_s, y_t)}, \\
& (x \otimes F_3) \otimes w_{(x_{\lambda_1 + \lambda_2 + 1}, y_t)}, (x \otimes F_4) \otimes w_{(x_{\lambda_1 + \lambda_2 + 1}, y_t)}, \\
& (x \otimes E_3 F_3) \otimes w_{(x_{\lambda_1 + \lambda_2 + 1}, y_t)}, (x \otimes F_3 F_4) \otimes w_{(x_{\lambda_1 + \lambda_2 + 1}, y_t)}, \\
& x \in \{ F_{1}^{a} F_{2}^{b} E_{3}^{k} \, | \, k \in \mathbb{N}_{0}, a, b \in \{ 0, 1, \cdots, k \} \},  c \in \{ 0, 1, \cdots, k \}, \\
& s \in \{ 0, 1, \cdots, \lambda_1 + \lambda_2 + 1 \}, t \in \{ 0, 1, \cdots, \lambda_1 - \lambda_2 \} \}.
\end{align*}
\end{theorem}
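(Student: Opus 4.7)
The plan is to combine the reductions already in hand: Proposition~\ref{F4}, Corollary~\ref{F}, Proposition~\ref{l1} and Proposition~\ref{l2}. My starting point is the spanning set that appears in the excerpt immediately before Theorem~\ref{si}, namely elements of the form $(F_1^a F_2^b E_3^k \otimes y)\otimes w_{(x_s,y_t)}$ with $a,b\in\{0,\dots,k\}$, where $y$ ranges over $\{1,E_3,E_4,E_3E_4\}$ for all admissible $s$, and additionally over $\{F_3,F_4,E_3F_3,F_3F_4\}$ only at the boundary $s=\lambda_1+\lambda_2+1$ (Proposition~\ref{F4} and Corollary~\ref{F} having been used to transfer $F_3,F_4$-type entries out of interior values of $s$ into $E_3,E_4$-type entries). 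The new reduction to perform is to delete all powers of $F_1$ from the first tensor slot when $y\in\{1,E_3,E_4,E_3E_4\}$, keeping only $F_2^c$ with $c\in\{0,\dots,k\}$.

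For the case $y=E_3$ and $s\in\{0,\dots,\lambda_1+\lambda_2\}$, Proposition~\ref{l1} already rewrites $(F_1 E_3^k\otimes E_3)\otimes w_{(x_s,y_t)}$ modulo $Z$ as a combination of $(E_3^{k-2}\otimes E_3)\otimes (E_2)_\Delta w_{(x_s,y_t)}$ and $(E_3^k\otimes E_3)\otimes (F_1)_\Delta w_{(x_s,y_t)}$. I would iterate this: to treat $F_1^a F_2^b E_3^k$, first move each occurrence of $F_1$ in the first slot across the tensor by rewriting $F_1\otimes\bullet$ via the identities $F_1=(F_1)_\Delta-1\otimes\alpha(F_1)$ and \eqref{j3}, which is legitimate because $(F_1)_\Delta,(F_2)_\Delta,(E_2)_\Delta\in\mathcal{B}$ act on $W$. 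The $F_2^b$ prefactor cannot be eliminated by this mechanism (relation \eqref{j1} only produces lower $E_3$ powers in the presence of $F_1$, not $F_2$), so it must be kept as the $F_2^c$ factor with $c\in\{0,\dots,k\}$. For $y\in\{1,E_4,E_3E_4\}$ I would prove analogues of Proposition~\ref{l1} by the same method used to prove \ref{l1} itself: expand $F_1$ as a commutator of noncompact generators, apply the containments \eqref{EF} and the congruences \eqref{anticom}, and reabsorb any residual $F_4$-type term via Proposition~\ref{F4} and Corollary~\ref{F}. The resulting first slot is free of $F_1$, so the induction on $a$ terminates and lands in the prescribed form $F_2^c E_3^k$.

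For the boundary $s=\lambda_1+\lambda_2+1$, Proposition~\ref{F4} produces a vanishing denominator and cannot absorb $F_4$ into $E_3$; here Proposition~\ref{l2} replaces Proposition~\ref{l1} and exchanges $F_1$ elimination for a tail term $(E_3^k\otimes F_4)\otimes w_{(x_{\lambda_1+\lambda_2+1},y_t)}$, which is precisely one of the boundary generators retained in the statement. Analogous analogues for $y\in\{1,E_4,E_3E_4\}$ produce tail terms of the form $(E_3^k\otimes y')\otimes w_{(x_{\lambda_1+\lambda_2+1},y_t)}$ with $y'\in\{F_3,F_4,E_3F_3,F_3F_4\}$, which are the four extra generators listed at $s=\lambda_1+\lambda_2+1$ in the theorem. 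Collecting all outputs from these reductions gives exactly the spanning set displayed in Theorem~\ref{si}.

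The main obstacle is bookkeeping: one must carry out the $F_1$-elimination induction simultaneously for all four $y$-values and verify that no intermediate step escapes the listed spanning family -- in particular that the boundary contributions generated at $s=\lambda_1+\lambda_2+1$ match exactly the allowed entries with $y\in\{F_3,F_4,E_3F_3,F_3F_4\}$, and that the $F_2$ exponent stays in $\{0,\dots,k\}$ throughout. Establishing analogues of Propositions~\ref{l1} and \ref{l2} for $y\in\{1,E_4,E_3E_4\}$ is mechanical but requires care in choosing which commutator identity among \eqref{anticom1}, \eqref{anticom} and \eqref{connection} to use at each step; once these analogues are in place the theorem follows by a straightforward induction on $a$.
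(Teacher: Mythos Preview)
Your proposal is correct and matches the paper's approach. The paper's own proof is just the single line ``from propositions~\ref{l1} and~\ref{l2} and previous conclusions we get'' the theorem; you have correctly unpacked what this entails, namely an induction on the $F_1$--exponent using Propositions~\ref{l1} and~\ref{l2} for $y=E_3$, together with the analogous statements for $y\in\{1,E_4,E_3E_4\}$ (the paper leaves these implicit, but the four parallel figures immediately following the theorem, one for each pair $(E_3,F_4)$, $(E_3E_4,E_3F_3)$, $(E_4,F_3)$, $(1,F_3F_4)$, make clear that the same mechanism is intended for each).
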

\newpage
We will show that the above spanning set is also a basis of $\mathcal{A} \otimes_{\mathcal{B}}W$ a bit later. In the figure below $ c \in \{ 0, 1, \cdots, k \}$ and $t \in \{ 0, 1, \cdots, \lambda_1 - \lambda_2 \}$.
\begin{figure}[ht]
\centering
\caption{Basis}
\bigskip
\bigskip
\begin{tikzpicture}
\tikzstyle{every node}=[draw = black,shape=circle, fill, inner sep=0pt,minimum size=4pt]
\node[label=left: $F_{2}^{c} E_{3}^{k} \qquad$] at (0,0) {};
\node[label=left: $F_1 F_{2}^{c} E_{3}^{k} \qquad$] at (0,-1) {};
\node[label=left: $F_{1}^{2} F_{2}^{c} E_{3}^{k} \qquad$] at (0,-2) {};

\filldraw [black] (0, -3.3) circle (0.5pt);
\filldraw [black] (0, -3.5) circle (0.5pt);
\filldraw [black] (0, -3.7) circle (0.5pt);

\node[label=left: $F_{1}^{k} F_{2}^{c} E_{3}^{k} \qquad$] at (0,-5) {};

\node[label=above: $E_3$] at (2,-2) {};
\node[label=below: $F_4$] at (2,-3) {};

\node[label=right: $ \qquad w_{(x_0, y_t)}$] at (4,0) {};
\node[label=right: $ \qquad (F_{1})_{\Delta} w_{(x_0, y_t)}$] at (4,-1) {};
\node[label=right: $\qquad (F_{1})_{\Delta}^{2} w_{(x_0, y_t)}$] at (4,-2) {};

\filldraw [black] (4, -3.3) circle (0.5pt);
\filldraw [black] (4, -3.5) circle (0.5pt);
\filldraw [black] (4, -3.7) circle (0.5pt);

\node[label=right: $\qquad (F_{1})_{\Delta}^{\lambda_1 + \lambda_2 + 1} w_{(x_0, y_t)}$] at (4,-5) {};

\draw (0, 0) -- (2, -2);
\draw (4, 0) -- (2, -2);
\draw (4, -1) -- (2, -2);
\draw (4, -2) -- (2, -2);
\draw (4, -5) -- (2, -2);

\draw (0, 0) -- (2, -3);
\draw (0, -1) -- (2, -3);
\draw (0, -2) -- (2, -3);
\draw (0, -5) -- (2, -3);
\draw (4, -5) -- (2, -3);

\end{tikzpicture}
\end{figure}
\begin{figure}[ht]
\centering
\begin{tikzpicture}
\tikzstyle{every node}=[draw = black,shape=circle, fill, inner sep=0pt,minimum size=4pt]
\node[label=left: $F_{2}^{c} E_{3}^{k} \qquad$] at (0,0) {};
\node[label=left: $F_1 F_{2}^{c} E_{3}^{k} \qquad$] at (0,-1) {};
\node[label=left: $F_{1}^{2} F_{2}^{c} E_{3}^{k} \qquad$] at (0,-2) {};

\filldraw [black] (0, -3.3) circle (0.5pt);
\filldraw [black] (0, -3.5) circle (0.5pt);
\filldraw [black] (0, -3.7) circle (0.5pt);

\node[label=left: $F_{1}^{k} F_{2}^{c} E_{3}^{k} \qquad$] at (0,-5) {};

\node[label=above: $E_3 E_4$] at (2,-2) {};
\node[label=below: $E_3 F_3$] at (2,-3) {};

\node[label=right: $ \qquad w_{(x_0, y_t)}$] at (4,0) {};
\node[label=right: $ \qquad (F_{1})_{\Delta} w_{(x_0, y_t)}$] at (4,-1) {};
\node[label=right: $\qquad (F_{1})_{\Delta}^{2} w_{(x_0, y_t)}$] at (4,-2) {};

\filldraw [black] (4, -3.3) circle (0.5pt);
\filldraw [black] (4, -3.5) circle (0.5pt);
\filldraw [black] (4, -3.7) circle (0.5pt);

\node[label=right: $\qquad (F_{1})_{\Delta}^{\lambda_1 + \lambda_2 + 1} w_{(x_0, y_t)}$] at (4,-5) {};

\draw (0, 0) -- (2, -2);
\draw (4, 0) -- (2, -2);
\draw (4, -1) -- (2, -2);
\draw (4, -2) -- (2, -2);
\draw (4, -5) -- (2, -2);

\draw (0, 0) -- (2, -3);
\draw (0, -1) -- (2, -3);
\draw (0, -2) -- (2, -3);
\draw (0, -5) -- (2, -3);
\draw (4, -5) -- (2, -3);

\end{tikzpicture}
\end{figure}
\begin{figure}[ht]
\centering
\begin{tikzpicture}
\tikzstyle{every node}=[draw = black,shape=circle, fill, inner sep=0pt,minimum size=4pt]
\node[label=left: $F_{2}^{c} E_{3}^{k} \qquad$] at (0,0) {};
\node[label=left: $F_1 F_{2}^{c} E_{3}^{k} \qquad$] at (0,-1) {};
\node[label=left: $F_{1}^{2} F_{2}^{c} E_{3}^{k} \qquad$] at (0,-2) {};

\filldraw [black] (0, -3.3) circle (0.5pt);
\filldraw [black] (0, -3.5) circle (0.5pt);
\filldraw [black] (0, -3.7) circle (0.5pt);

\node[label=left: $F_{1}^{k} F_{2}^{c} E_{3}^{k} \qquad$] at (0,-5) {};

\node[label=above: $E_4$] at (2,-2) {};
\node[label=below: $F_3$] at (2,-3) {};

\node[label=right: $ \qquad w_{(x_0, y_t)}$] at (4,0) {};
\node[label=right: $ \qquad (F_{1})_{\Delta} w_{(x_0, y_t)}$] at (4,-1) {};
\node[label=right: $\qquad (F_{1})_{\Delta}^{2} w_{(x_0, y_t)}$] at (4,-2) {};

\filldraw [black] (4, -3.3) circle (0.5pt);
\filldraw [black] (4, -3.5) circle (0.5pt);
\filldraw [black] (4, -3.7) circle (0.5pt);

\node[label=right: $\qquad (F_{1})_{\Delta}^{\lambda_1 + \lambda_2 + 1} w_{(x_0, y_t)}$] at (4,-5) {};

\draw (0, 0) -- (2, -2);
\draw (4, 0) -- (2, -2);
\draw (4, -1) -- (2, -2);
\draw (4, -2) -- (2, -2);
\draw (4, -5) -- (2, -2);

\draw (0, 0) -- (2, -3);
\draw (0, -1) -- (2, -3);
\draw (0, -2) -- (2, -3);
\draw (0, -5) -- (2, -3);
\draw (4, -5) -- (2, -3);

\end{tikzpicture}
\end{figure}
\begin{figure}[ht]
\centering
\begin{tikzpicture}
\tikzstyle{every node}=[draw = black,shape=circle, fill, inner sep=0pt,minimum size=4pt]
\node[label=left: $F_{2}^{c} E_{3}^{k} \qquad$] at (0,0) {};
\node[label=left: $F_1 F_{2}^{c} E_{3}^{k} \qquad$] at (0,-1) {};
\node[label=left: $F_{1}^{2} F_{2}^{c} E_{3}^{k} \qquad$] at (0,-2) {};

\filldraw [black] (0, -3.3) circle (0.5pt);
\filldraw [black] (0, -3.5) circle (0.5pt);
\filldraw [black] (0, -3.7) circle (0.5pt);

\node[label=left: $F_{1}^{k} F_{2}^{c} E_{3}^{k} \qquad$] at (0,-5) {};

\node[label=above: $1$] at (2,-2) {};
\node[label=below: $F_3 F_4$] at (2,-3) {};

\node[label=right: $ \qquad w_{(x_0, y_t)}$] at (4,0) {};
\node[label=right: $ \qquad (F_{1})_{\Delta} w_{(x_0, y_t)}$] at (4,-1) {};
\node[label=right: $\qquad (F_{1})_{\Delta}^{2} w_{(x_0, y_t)}$] at (4,-2) {};

\filldraw [black] (4, -3.3) circle (0.5pt);
\filldraw [black] (4, -3.5) circle (0.5pt);
\filldraw [black] (4, -3.7) circle (0.5pt);

\node[label=right: $\qquad (F_{1})_{\Delta}^{\lambda_1 + \lambda_2 + 1} w_{(x_0, y_t)}$] at (4,-5) {};

\draw (0, 0) -- (2, -2);
\draw (4, 0) -- (2, -2);
\draw (4, -1) -- (2, -2);
\draw (4, -2) -- (2, -2);
\draw (4, -5) -- (2, -2);

\draw (0, 0) -- (2, -3);
\draw (0, -1) -- (2, -3);
\draw (0, -2) -- (2, -3);
\draw (0, -5) -- (2, -3);
\draw (4, -5) -- (2, -3);

\end{tikzpicture}
\end{figure}

\section{Construction of the discrete series representations of the group $SO_{e}(4,1)$ via Dirac induction}
Let $X = A_{\mathfrak{b}}(\lambda)$ be a discrete series as in Section $2$. Our goal is to get a basis of $X \otimes S$ which is similar to the spanning set from the theorem \ref{si}. The first step is to ``remove" $E_4$ from Theorem \ref{basis1}. Since
$$
(\text{ad}F_2) E_{3}^{k} = k E_{3}^{k - 1} E_{4} - k(k - 1) E_{3}^{k - 2} E_{1},
$$
one can easily show by induction that for all $k \in \mathbb{N}_{0}$ and for all $l \in \{ 0, 1, \cdots, \lambda_1 - \lambda_2 \}$ we have
$$
(\text{ad}F_2)^{l} E_{3}^{k + l} \cdot v_{\lambda_1 + \lambda_2 + 2} \boxtimes v_{\lambda_1 - \lambda_2} = (k + l)(k + l - 1) \cdots (k + 1)E_{3}^{k} E_{4}^{l} \cdot v_{\lambda_1 + \lambda_2 + 2} \boxtimes v_{\lambda_1 - \lambda_2}.
$$
One basis of the vector space $Z_t$ (see \eqref{Zt1}) is given by 
\begin{align}\label{Zt2}
S_{t}^{1} = \{ F_{1}^{a} F_{2}^{b} (\text{ad}F_2)^{l} E_{3}^{k + l} \cdot v_{\lambda_1 + \lambda_2  + 2} \boxtimes v_{\lambda_1 - \lambda_2} \, | \, & k \in \mathbb{N}_{0}, l \in \{ 0, 1, \cdots, \lambda_1 - \lambda_2 \}, k + l \leq t,  \notag \\
& a \in \{ 0, 1, \cdots , \lambda_1 + \lambda_2  + 2 + k + l \}, \notag \\
& b \in \{ 0, 1, \cdots , \lambda_1 - \lambda_2 + k - l \} \}.
\end{align}
Furthermore, we have 
\begin{equation}\label{nizi}
(\text{ad}F_2)^{l} E_{3}^{k + l} \in \Span \{ F_{2}^{s} E_{3}^{k + l} F_{2}^{t'} \, | \, s + t' = l \}.
\end{equation}
From (\ref{Zt2}) and \eqref{nizi} it easily follows that the set 
\begin{align}\label{Zt3}
S_{t}^{2} = \{ F_{1}^{a} F_{2}^{b} E_{3}^{k + l} F_{2}^{l} \cdot v_{\lambda_1 + \lambda_2  + 2} \boxtimes v_{\lambda_1 - \lambda_2} \, | \, & k \in \mathbb{N}_{0}, l \in \{ 0, 1, \cdots, \lambda_1 - \lambda_2 \}, k + l \leq t, \notag \\
& a \in \{ 0, 1, \cdots , \lambda_1 + \lambda_2  + 2 + k + l \}, \notag \\
& b \in \{ 0, 1, \cdots , \lambda_1 - \lambda_2 + k - l \} \}
\end{align}
is a spanning set of the vector space $Z_t$. Since the sets $S_{t}^{1}$ and $S_{t}^{2}$ have the same cardinality, the set $S_{t}^{2}$ is a basis of $Z_t$. If we denote $r = k + l$ it follows that one basis of the vector space $Z_t \otimes E_3$ is given by
\begin{align}\label{ZtE3}
(S_{t})_{E_3}^{1} = \{ F_{1}^{a} F_{2}^{b} E_{3}^{r} \otimes E_3 \cdot w_{(\lambda_1 + \frac{1}{2}, y_l)} \, | \, & r \in \{0, \cdots, t \}, l \in \{ 0, 1, \cdots, \text{min}\{\lambda_1 - \lambda_2, r \} \},  \notag \\
& a \in \{ 0, 1, \cdots , \lambda_1 + \lambda_2  + 2 + r \}, \notag \\
& b \in \{ 0, 1, \cdots , \lambda_1 - \lambda_2 + r - 2l \} \}.
\end{align}
To prove what we claim we need the following lemma.
\begin{lemma}\label{smanjiF2}
For $x, y \in \mathbb{N}_{0}$ such that $x > y$ and for $l \in \{ 0, 1, \cdots, \lambda_1 - \lambda_2 \} $ we have
\begin{align*}
F_{2}^{x} E_{3}^{y} \otimes E_3 \cdot w_{(\lambda_1 + \frac{1}{2}, y_l)} \in \Span \{ & F_{2}^{z} E_{3}^{v} \otimes E_3 \cdot w_{(\lambda_1 + \frac{1}{2}, \lambda_2 + \frac{1}{2} - p)} \, | \, \\
& p \in \{ 0, 1, \cdots, \lambda_1 - \lambda_2 \}, v \in \{ 0, 1, \cdots, y \}, z \leq v \}.
\end{align*}
\end{lemma}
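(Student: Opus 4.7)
The proof proceeds by induction on $x$ for fixed $y$ (under the hypothesis $x>y$); the base case $x=y+1$ is treated in the same manner as the inductive step. The key tool is the factorization in $\mathcal{A}$
\[
F_2 \otimes 1 = (F_2)_\Delta - 1 \otimes \alpha(F_2),
\]
which converts a $F_2$ from $U(\g)$ into $(F_2)_\Delta \in \ka_\Delta \subset \mathcal{B}$ (crossing $\otimes_{\mathcal{B}}$ to act on $W$ as a $\ka_2$-lowering operator, sending $w_{(x_0, y_l)}$ to a scalar multiple of $w_{(x_0, y_{l+1})}$) plus a $C(\p)$-correction. Splitting the leftmost $F_2$ this way gives
\[
F_2^x E_3^y \otimes E_3 \cdot w_{(x_0, y_l)} = (F_2)_\Delta \cdot (F_2^{x-1} E_3^y \otimes E_3) \cdot w_{(x_0, y_l)} - (F_2^{x-1} E_3^y \otimes \alpha(F_2) E_3) \cdot w_{(x_0, y_l)}.
\]

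After moving $(F_2)_\Delta$ across the tensor product (using $b(a \otimes w) \equiv a \otimes bw + [b,a]\otimes w \pmod{Z}$ for $b \in \mathcal{B}$), the expression becomes, modulo $Z$,
\[
(F_2^{x-1} E_3^y \otimes E_3)\cdot w_{(x_0, y_{l+1})} + [(F_2)_\Delta, F_2^{x-1} E_3^y \otimes E_3]\cdot w_{(x_0, y_l)} - (F_2^{x-1} E_3^y \otimes \alpha(F_2) E_3)\cdot w_{(x_0, y_l)}.
\]
The leading term has $F_2$-exponent $x-1$ and $E_3$-exponent $y$: if $x-1>y$ it falls under the inductive hypothesis, while if $x-1=y$ it already has the form $F_2^z E_3^v \otimes E_3 \cdot w'$ with $z\leq v$. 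The commutator term expands via $[F_2,E_3^y] = y E_3^{y-1}E_4 - y(y-1)E_1 E_3^{y-2}$ (an easy induction from $[F_2,E_3]=E_4$ and $[E_3,E_4]=2E_1$) together with a $F_2^{x-1}E_3^y\otimes[\alpha(F_2),E_3]$ piece. The latter combines with the $-\alpha(F_2)E_3$ term to give $-F_2^{x-1}E_3^y\otimes E_3\alpha(F_2)$, whose $C(\p)$-factor is reduced using the explicit form of $\alpha(F_2)$ (a quadratic element of $C(\p)$ analogous to $\alpha(E_2)=-\tfrac{1}{2}E_3 F_4$ computed in the proof of Proposition \ref{l1}) together with the reductions \eqref{EF} and \eqref{anticom} from Section~3.

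The residual $E_4$ and $E_1$ factors on the $U(\g)$-side are converted back to admissible form by the identities $E_4\otimes 1 = [(F_2)_\Delta, E_3\otimes 1]$ (derived from $E_4=[F_2,E_3]$, the $\alpha(F_2)$-contributions cancelling because $1\otimes\alpha(F_2)$ commutes with $E_3\otimes 1$) and $E_1\otimes 1 = (E_1)_\Delta - 1\otimes\alpha(E_1)$ (with $(E_1)_\Delta\in\mathcal{B}$), after which the same $\mathcal{B}$-passage mechanism applies. The main obstacle is the combinatorial bookkeeping: several branches arise at each inductive step (leading term, two commutator contributions, $C(\p)$-correction), and one must verify along each branch that the inequalities $v\leq y$ and $z\leq v$ are preserved. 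Two structural inputs make the induction close: the isotropy $\alpha(F_2)^2=0$ in $C(\p)$ (which bounds the $C(\p)$-degree growth when the factorization is iterated), and the fact that reinserting $E_4$ as $[(F_2)_\Delta, E_3\otimes 1]$ brings in only a single extra $(F_2)_\Delta$ while strictly reducing the $U(\g)$-complexity, preventing the $F_2$-power from re-inflating beyond $x-1$.
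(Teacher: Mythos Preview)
Your argument is set in the wrong space. Lemma~\ref{smanjiF2} lives in Section~4 and is a statement about honest elements of $X\otimes S$ (with $X=A_{\mathfrak{b}}(\lambda)$); the dot in $F_2^xE_3^y\otimes E_3\cdot w_{(x_0,y_l)}$ is the $\mathcal A$-action on a vector in $X\otimes S$, not an element of $\mathcal A\otimes_{\mathcal B}W$. There is no $Z$ here, no ``crossing $\otimes_{\mathcal B}$'', and the reductions \eqref{EF} and \eqref{anticom} you invoke are statements \emph{modulo $Z$} in the induced module, not identities available in $X\otimes S$. (Some of them do have $X\otimes S$ analogues---that is precisely the content of Propositions~\ref{m1} and~\ref{m2}---but those come \emph{after} this lemma, so you cannot use them here.)

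Even reinterpreted in $X\otimes S$, your route creates a real obstacle: splitting off $(F_2)_\Delta$ forces the $C(\frp)$-factor to change via $\alpha(F_2)\cdot E_3$, and the span you are aiming for consists only of elements with second tensor factor equal to $E_3$. You would then need extra reductions to bring the Clifford part back to $E_3$, and your justification for those reductions is exactly the Section~3 machinery that does not apply. The paper avoids this entirely: its proof never touches the $C(\frp)$-slot. It first reduces to the case $x=y+1$ and then argues by induction on $y$, using only the $U(\frg)$-identities $[F_2,E_3]=E_4$, $[F_2,E_4]=0$ to rewrite $F_2^{k+1}E_3^k$ as $kE_4F_2^kE_3^{k-1}$ plus a term where one $F_2$ has been pushed through to act on $w$ (shifting $y_l\mapsto y_l-1$). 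The stray $E_4$ on the left is then absorbed by the same commutation trick in reverse (see \eqref{E4(k + 1)}), which raises both the $F_2$- and $E_3$-exponents by one and shifts $w$ again, closing the induction with $z\le v\le y$ throughout.
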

\begin{proof}
It is sufficient to prove the claim for $x = y + 1$. The general case then follows easily by induction. The claim is obvious for $y = 0$. Let us assume that the claim is true for some $y = k - 1$. Then we have
\begin{align}\label{k+1}
F_{2}^{k + 1} E_{3}^{k} \otimes E_3 \cdot w_{(\lambda_1 + \frac{1}{2}, y_l)} = & k E_{4} F_{2}^{k} E_{3}^{k - 1} \otimes E_3 \cdot w_{(\lambda_1 + \frac{1}{2}, y_l)} \notag \\
&  +  F_{2}^{k} E_{3}^{k} \otimes E_3 \cdot w_{(\lambda_1 + \frac{1}{2}, y_l - 1)},
\end{align}
where $w_{(\lambda_1 + \frac{1}{2}, y_l - 1)} = 0$ for $l = \lambda_1 - \lambda_2$. Furthermore, we have
\begin{align}\label{E4(k + 1)}
E_4 F_{2}^{z} E_{3}^{v} \otimes E_3 \cdot w_{(\lambda_1 + \frac{1}{2}, \lambda_2 + \frac{1}{2} - p)} & = F_{2}^{z} E_{3}^{v} E_4 \otimes E_3 \cdot w_{(\lambda_1 + \frac{1}{2}, \lambda_2 + \frac{1}{2} - p)} \notag \\
& \in \Span \{ F_{2}^{z} (\text{ad} F_2) E_{3}^{v + 1} \otimes E_3 \cdot w_{(\lambda_1 + \frac{1}{2}, \lambda_2 + \frac{1}{2} - p)} \} \notag \\
& \in \Span \{ F_{2}^{z + 1} E_{3}^{v + 1} \otimes E_3 \cdot w_{(\lambda_1 + \frac{1}{2}, \lambda_2 + \frac{1}{2} - p)} \notag \\
& + F_{2}^{z} E_{3}^{v + 1} \otimes E_3 \cdot w_{(\lambda_1 + \frac{1}{2}, \lambda_2 + \frac{1}{2} - p - 1)} \}.
\end{align}
If $z \leq v \leq k - 1$ then $z + 1 \leq v + 1 \leq k$. The proof follows from (\ref{k+1}) and (\ref{E4(k + 1)}).
\end{proof}
Let us consider the following set
\begin{align}\label{ZtE31}
(S_{t})_{E_3}^{2} = \{ F_{1}^{a} F_{2}^{b} E_{3}^{r} \otimes E_3 \cdot w_{(\lambda_1 + \frac{1}{2}, y_l)} \, | \, & r \in \{0, \cdots, t \}, l \in \{ 0, 1, \cdots, \lambda_1 - \lambda_2 \},  \notag \\
& a \in \{ 0, 1, \cdots , \lambda_1 + \lambda_2  + 2 + r \}, \notag \\
& b \in \{ 0, 1, \cdots , r \} \}.
\end{align}
%Since for all $r \in \mathbb{N}_{0}, w \in W$
%$$
%E_{3}^{r} \otimes E_3 \cdot w \subset ((V_{r} \boxtimes V_{r}) \otimes (V_{m} \boxtimes V_{n})) \otimes E_3,
%$$
%where $m = \lambda_1 + \lambda_2 + 2, n = \lambda_1 - \lambda_2$, then
%\begin{align*}
%E_{3}^{r} \otimes E_3 \cdot w & \subset ((V_{r} \otimes V_{m}) \boxtimes (V_{r} \otimes V_{n})) \otimes E_3 \\
%& = ((V_{m + r} \oplus V_{m + r - 2} \oplus \cdots \oplus V_{|m - r|}) \boxtimes (V_{n + r} \oplus V_{n + r - 2} \oplus \cdots \oplus V_{|n - r|})) \otimes E_3,
%\end{align*}
%from where it follows that $(S_{t})_{E_3}^{2} \subset Z_t \otimes E_3$. 
It follows from lemma \ref{smanjiF2} that the set $(S_{t})_{E_3}^{2} \subset Z_t \otimes E_3$ is a spanning set of $Z_t \otimes E_3$. Since the cardinality of the set $(S_{t})_{E_3}^{2}$ is 
$$
\sum_{r = 0}^{t}(m_0 + r + 1)(r + 1)(n_0 + 1),
$$
(where $m_0 = \lambda_1 + \lambda_2 + 2, n_0 = \lambda_1 - \lambda_2$) and the dimension of the space
$$
Z_t \otimes E_3 =  (V_{m + r} \boxtimes (V_{n + r} \oplus V_{n + r - 2} \oplus \cdots \oplus V_{|n - r|})) \otimes E_3
$$
is 
$$
\sum_{r = 0}^{t}(m_0 + r + 1)(\text{dim}V_{n_0 + r} + \text{dim}V_{n_0 + r - 2} + \cdots + \text{dim}V_{|n_0 - r|}) = \sum_{r = 0}^{t}(m_0 + r + 1)(r + 1)(n_0 + 1),
$$
the set $(S_{t})_{E_3}^{2}$ is a basis of $Z_t \otimes E_3$.
\vspace{.2in}

The next proposition is an analogue to the proposition \ref{l1} in the space $X \otimes S = A_{\mathfrak{b}}(\lambda) \otimes S$.
\begin{prop}\label{m1}
For $s \in \{0, 1, \cdots, \lambda_1 + \lambda_2 \}$, $l \in \{0, 1, \cdots, \lambda_1 - \lambda_2 \}$ and for any nonnegative integer $k$ we have
\begin{align*}
(F_1 E_{3}^{k} \otimes E_3) \cdot w_{(x_s, y_l)} \in \Span_{\mathbb{C}}\{ (E_{3}^{k - 2} \otimes E_3) \cdot w_{(x_s, y_{l - 1})}, \\
(E_{3}^{k} \otimes E_3) \cdot w_{(x_{s + 1}, y_l)} \},
\end{align*}
where $E_{3}^{-1} = E_{3}^{-2} = 0$.
\end{prop}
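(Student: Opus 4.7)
The plan is to imitate the proof of Proposition \ref{l1} inside the module $X \otimes S$ rather than $\mathcal{A} \otimes_\mathcal{B} W$. The translation dictionary is: in the earlier setting, $a \cdot D$ mapped into $Z$ because $D \in \mathcal{B}$; here the analogue is that $D \cdot w = 0$ for every $w \in H^D(X)$, since $D \in \mathcal{I}$ annihilates Dirac cohomology. Starting from the same $U(\mathfrak{g})$-identity
\[
F_1 E_3^k = -k(k-1) E_3^{k-2} E_2 + k E_3^{k-1} F_4 + E_3^k F_1,
\]
it therefore suffices to analyze each of $(E_2 \otimes E_3) \cdot w_{(x_s, y_l)}$, $(F_4 \otimes E_3) \cdot w_{(x_s, y_l)}$ and $(F_1 \otimes E_3) \cdot w_{(x_s, y_l)}$ separately, then multiply on the left by $E_3^{k-2}, E_3^{k-1}, E_3^k$ respectively.

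For the $E_2$-term I would write $E_2 \otimes 1 = (E_2)_\Delta - 1 \otimes \alpha(E_2)$ with $\alpha(E_2) = -\tfrac12 E_3 F_4$, check using $E_3^2 = 0$ and $\{E_3, F_4\} = 0$ in $C(\mathfrak{p})$ that $\alpha(E_2) \cdot E_3 = 0$ and that $(E_2)_\Delta$ commutes with $1 \otimes E_3$, and conclude that $(E_2 \otimes E_3) \cdot w = (1 \otimes E_3) (E_2)_\Delta w$. Since $(E_2)_\Delta, (F_2)_\Delta, (H_1 - H_2)_\Delta$ form a diagonal $\mathfrak{sl}_2$-triple commuting with $(F_1)_\Delta$, the usual $\mathfrak{sl}_2$-formula will yield $(E_2)_\Delta w_{(x_s, y_l)} = l(\lambda_1 - \lambda_2 - l + 1) w_{(x_s, y_{l-1})}$, supplying the first basis vector from the statement. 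For the $F_1$-term the same procedure with $\alpha(F_1) = \tfrac12 F_3 F_4$ leads, once the $(1 \otimes F_4)$-corrections produced by $[(F_1)_\Delta, 1 \otimes E_3] = -1 \otimes F_4$ and by $\alpha(F_1) \cdot E_3$ have cancelled, to
\[
(F_1 \otimes E_3) \cdot w_{(x_s, y_l)} = (1 \otimes E_3) w_{(x_{s+1}, y_l)} - \tfrac12 (1 \otimes E_3 F_3 F_4) w_{(x_s, y_l)};
\]
a direct computation of $E_3 F_3 F_4$ on $S$ (it kills $1, E_3, E_4$ and sends $E_3 \wedge E_4$ to $4 E_3$), combined with the explicit formula for $w_{(x_s, y_l)}$, would reduce the second term to a scalar multiple of $(1 \otimes E_3) w_{(x_{s+1}, y_l)}$.

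For the $F_4$-term I would use \eqref{anticom1} in the form $-2(E_3 \otimes F_4 - F_4 \otimes E_3) = [D, 1 \otimes E_3 F_4]$, together with $Dw = 0$ and the observation that $E_3 F_4$ annihilates the only spin components ($1$ and $E_3 \wedge E_4$) appearing in $w_{(x_s, y_l)}$; this gives $(F_4 \otimes E_3) w = (E_3 \otimes F_4) w$, and a short direct computation identifies $(1 \otimes F_4) w_{(x_s, y_l)}$ with $\tfrac{1}{\lambda_1 + \lambda_2 + 1 - s}(1 \otimes E_3) w_{(x_{s+1}, y_l)}$, the denominator being nonzero in the stated range of $s$. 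Summing the three contributions then places $(F_1 E_3^k \otimes E_3) \cdot w_{(x_s, y_l)}$ into the claimed two-dimensional span; the boundary cases $k = 0, 1$ and $l = 0$ are automatic from the convention $E_3^{-1} = E_3^{-2} = 0$ and from the vanishing of $l(\lambda_1 - \lambda_2 - l + 1)$ at $l = 0$. The main technical obstacle is verifying that the $(1 \otimes F_4)$-type correction terms produced by the failure of $(F_1)_\Delta$ to commute with $1 \otimes E_3$ collapse to a multiple of $(1 \otimes E_3) w_{(x_{s+1}, y_l)}$; this works only because of the precise coefficient $2(\lambda_1 + \lambda_2 + 2 - s)$ in the definition of $w_{(x_s, y_l)}$, which ties its $E_3 \wedge E_4$ and $1$ components together.
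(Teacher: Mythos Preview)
Your proposal is correct and follows essentially the same route as the paper: both start from the identity $F_1 E_3^{k} = -k(k-1)E_3^{k-2}E_2 + kE_3^{k-1}F_4 + E_3^{k}F_1$ and then reduce each of the three resulting terms via the diagonal embedding $\mathfrak{k}_\Delta$ (for the $E_2$ and $F_1$ pieces) together with the relation $(E_3\otimes F_4 - F_4\otimes E_3)\cdot w = 0$ coming from $D\cdot w = 0$ (for the $F_4$ piece). You spell out more of the Clifford-module arithmetic than the paper's terse proof, and a couple of intermediate signs are off (for instance $[(F_1)_\Delta, 1\otimes E_3] = 1\otimes F_4$ from the commutator table, not $-1\otimes F_4$), but since only membership in the two-dimensional span is being asserted these slips are harmless.
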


\proof
For each $k \in \mathbb{N}$ 
\begin{equation}\label{indukcija}
F_1 E_{3}^{k} = -k(k-1) E_{3}^{k - 2} E_2 + k E_{3}^{k - 1} F_4 + E_{3}^{k} F_1.
\end{equation}
Furthermore, we have
\begin{equation}\label{k2} 
(E_2 \otimes E_3) \cdot w_{(x_s, y_l)}  = l (\lambda_1 - \lambda_2 - l + 1) (1 \otimes E_3) \cdot w_{(x_s, y_{l - 1})}, 
\end{equation}
and
\begin{equation}\label{k3}
(F_1 \otimes E_3) \cdot w_{(x_s, y_l)} = \frac{\lambda_1 + \lambda_2 + 2 - s}{\lambda_1 + \lambda_2 + 1 - s}(1 \otimes E_3) \cdot w_{(x_{s + 1}, y_l)}.
\end{equation}
Since
$(E_3 \otimes F_4 - F_4 \otimes E_3) \cdot w = 0$ for all $w \in W$, 
we get
\begin{align}\label{k4}
& (F_4 \otimes E_3) \cdot w_{(x_s, y_l)} = (E_3 \otimes F_4) \cdot w_{(x_s, y_l)} \\
& = \frac{1}{\lambda_1 + \lambda_2 + 1 - s}(E_3 \otimes E_3) \cdot w_{(x_{s + 1}, y_l)}. \notag 
\end{align}
The proof follows from \eqref{indukcija}, \eqref{k2}, \eqref{k3} and (\ref{k4}).
\qed

\bigskip
The next proposition is an analogue to the proposition \ref{l2} in the space $X \otimes S = A_{\mathfrak{b}}(\lambda) \otimes S$.
\begin{prop}\label{m2}
For $l \in \{0, 1, \cdots, \lambda_1 - \lambda_2 \}$ and for any nonnegative integer $k$ we have
\begin{align*}
& (F_1 E_{3}^{k} \otimes E_3) \cdot w_{(x_{\lambda_1 + \lambda_2 + 1}, y_l)} \\
& \in \Span_{\mathbb{C}}\{ (E_{3}^{k - 2} \otimes E_3) \cdot w_{(x_{\lambda_1 + \lambda_2 + 1}, y_{l - 1})}, 
(E_{3}^{k} \otimes F_4) \cdot w_{(x_{\lambda_1 + \lambda_2 + 1}, y_l)} \},
\end{align*}
where $E_{3}^{-1} = E_{3}^{-2} = 0$.
\end{prop}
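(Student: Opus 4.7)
The plan is to replicate the proof of Proposition \ref{l2} verbatim in structure, replacing the tensor product $\otimes$ over $\mathcal{B}$ by the honest action $\cdot$ in $X \otimes S$, and handling the boundary case $s = \lambda_1+\lambda_2+1$ separately because the formula \eqref{k3} used in Proposition \ref{m1} acquires a singularity there.

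First I would apply the PBW-type identity \eqref{indukcija}, namely $F_1 E_3^k = -k(k-1) E_3^{k-2} E_2 + k E_3^{k-1} F_4 + E_3^k F_1$, to split $(F_1 E_3^k \otimes E_3) \cdot w_{(x_{\lambda_1+\lambda_2+1}, y_l)}$ into three contributions, one for each summand. For the $E_2$-contribution I would invoke equation \eqref{k2} (already established and reused in the proof of Proposition \ref{m1}), giving a scalar multiple of $(E_3^{k-2} \otimes E_3) \cdot w_{(x_{\lambda_1+\lambda_2+1}, y_{l-1})}$, one of the two terms in the claimed span.

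For the $F_4$-contribution I would commute so as to reach $(F_4 \otimes E_3)\cdot w$ and then use the identity $(F_4 \otimes E_3) \cdot w = (E_3 \otimes F_4) \cdot w$ valid on $W$. This identity follows in $X \otimes S$ from the Clifford-level equation \eqref{anticom1} applied to $w$, combined with the fact that $(1 \otimes E_3 F_4) \cdot w = 0$ (since $E_3F_4$ annihilates both $1$ and $E_3 \wedge E_4$, as recorded just before Lemma \ref{E3F4}) and $D \cdot w = 0$. This pushes the $F_4$-contribution into the form $k(E_3^k \otimes F_4) \cdot w_{(x_{\lambda_1+\lambda_2+1}, y_l)}$, which is the second term in the claimed span.

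The main obstacle is the $F_1$-contribution: the analog of \eqref{k3} has coefficient $(\lambda_1+\lambda_2+2-s)/(\lambda_1+\lambda_2+1-s)$, which is singular at $s = \lambda_1+\lambda_2+1$. Instead I would exploit the crucial vanishing $(F_1)_\Delta \cdot w_{(x_{\lambda_1+\lambda_2+1}, y_l)} = w_{(x_{\lambda_1+\lambda_2+2}, y_l)} = 0$, which holds because $s = \lambda_1+\lambda_2+2$ is outside the index range of the basis of $W$ (and which one verifies directly from the explicit formula for $w_{(x_s, y_l)}$). Writing $F_1 \otimes E_3 = (F_1)_\Delta(1\otimes E_3) - 1 \otimes \alpha(F_1) E_3$ and rewriting $(F_1)_\Delta(1\otimes E_3) = (1\otimes E_3)(F_1)_\Delta + 1 \otimes [\alpha(F_1),E_3]$, the $(1\otimes E_3)(F_1)_\Delta$ part dies against $w_{(x_{\lambda_1+\lambda_2+1}, y_l)}$, and a short Clifford computation with $\alpha(F_1) = \tfrac{1}{2} F_3 F_4$ expresses the remaining terms as a scalar multiple of $(1 \otimes F_4) \cdot w_{(x_{\lambda_1+\lambda_2+1}, y_l)}$. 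Left-multiplying by $E_3^k$ in the $\mathfrak{g}$-factor turns this into a multiple of $(E_3^k \otimes F_4) \cdot w_{(x_{\lambda_1+\lambda_2+1}, y_l)}$. Summing the three contributions places $(F_1 E_3^k \otimes E_3) \cdot w_{(x_{\lambda_1+\lambda_2+1}, y_l)}$ into the asserted span, completing the proof.
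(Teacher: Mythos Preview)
Your proposal is correct and follows essentially the same approach as the paper. The paper's proof is extremely terse --- it simply cites \eqref{j1}, \eqref{k2}, and records without derivation the two identities $(F_1 \otimes E_3)\cdot w_{(x_{\lambda_1+\lambda_2+1},y_l)} = (1\otimes F_4)\cdot w_{(x_{\lambda_1+\lambda_2+1},y_l)}$ and $(F_4\otimes E_3)\cdot w_{(x_{\lambda_1+\lambda_2+1},y_l)} = (E_3\otimes F_4)\cdot w_{(x_{\lambda_1+\lambda_2+1},y_l)}$ --- whereas you spell out exactly how these identities arise from $(F_1)_\Delta w_{(x_{\lambda_1+\lambda_2+1},y_l)}=0$ together with the Clifford computation using $\alpha(F_1)=\tfrac12 F_3F_4$, and from the anticommutator relation in \eqref{anticom1} combined with $(1\otimes E_3F_4)\cdot w = 0$ and $D\cdot w=0$, respectively.
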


\begin{proof}
The proof follows from \eqref{j1}, \eqref{k2} and
\begin{align*}
& (F_1 \otimes E_3) \cdot w_{(x_{\lambda_1 + \lambda_2 + 1}, y_l)} = (1 \otimes F_4) \cdot w_{(x_{\lambda_1 + \lambda_2 + 1}, y_l)} ; \\
& (F_4 \otimes E_3) \cdot w_{(x_{\lambda_1 + \lambda_2 + 1}, y_l)} = (E_3 \otimes F_4) \cdot w_{(x_{\lambda_1 + \lambda_2 + 1}, y_l)}.
\end{align*}
\end{proof}
From propositions \ref{m1} and \ref{m2} we get that the set
\begin{align}\label{ZtE32}
(S_t)_{\text{final}} = \{ & (F_{2}^{c} E_{3}^{k} \otimes E_3) \cdot w_{(x_s, y_l)},  \\
& (F_{1}^{a} F_{2}^{b} E_{3}^{k} \otimes F_4) \otimes w_{(x_{\lambda_1 + \lambda_2 + 1}, y_l)}, \, | \,  \notag\\
& k \in \{ 0, 1, \cdots, t \}, a, b, c \in \{ 0, 1, \cdots, k \}, \notag \\
& s \in \{ 0, 1, \cdots, \lambda_1 + \lambda_2 + 1 \}, l \in \{ 0, 1, \cdots, \lambda_1 - \lambda_2 \} \} \subset Z_t \otimes E_3 \notag
\end{align}
is a spanning set of $Z_t \otimes E_3$. Since the cardinality of the set $(S_t)_{\text{final}}$ equals the cardinality of the set $(S_t)_{E_3}^{2}$ which is a basis of $Z_t \otimes E_3$, we conclude that $(S_t)_{\text{final}}$ is a basis of $Z_t \otimes E_3$.
Finally, from the previous conclusions we get that one basis of the vector space $X \otimes S$ is given by
\begin{align*}
\{ & (F_{2}^{c} E_{3}^{k} \otimes E_3) \cdot w_{(x_s, y_l)}, (F_{2}^{c} E_{3}^{k} \otimes E_4) \cdot w_{(x_s, y_l)}, \\
& (F_{2}^{c} E_{3}^{k} \otimes 1) \cdot w_{(x_s, y_l)}, (F_{2}^{c} E_{3}^{k} \otimes E_3 E_4) \cdot w_{(x_s, y_l)}, \\
& (x \otimes F_3) \cdot w_{(x_{\lambda_1 + \lambda_2 + 1}, y_l)}, (x \otimes F_4) \cdot w_{(x_{\lambda_1 + \lambda_2 + 1}, y_l)}, \\
& (x \otimes E_3 F_3) \cdot w_{(x_{\lambda_1 + \lambda_2 + 1}, y_l)}, (x \otimes F_3 F_4) \cdot w_{(x_{\lambda_1 + \lambda_2 + 1}, y_l)}, \\
& x \in \{ F_{1}^{a} F_{2}^{b} E_{3}^{k} \, | \, k \in \mathbb{N}_{0}, a, b \in \{ 0, 1, \cdots, k \} \},  c \in \{ 0, 1, \cdots, k \}, \\
& s \in \{ 0, 1, \cdots, \lambda_1 + \lambda_2 + 1 \}, l \in \{ 0, 1, \cdots, \lambda_1 - \lambda_2 \} \}.
\end{align*}
The action map $\phi : \mathcal{A} \otimes W \longrightarrow X \otimes S$
$$
\phi(a \otimes w) = a \cdot w, \quad a \in \mathcal{A}, w \in W
$$
maps the spanning set of $\mathcal{A} \otimes_{\mathcal{B}} W$ from Theorem \ref{si} into the basis of the space $X \otimes S$, which implies that this spanning set is a basis of the vector space $\mathcal{A} \otimes_{\mathcal{B}} W$. Finally, we get:
\begin{theorem}\label{themain}
Let $X$ be the vector space of the discrete series representation of the group $SO_{e}(4,1)$ and let $W$ be the Dirac cohomology of $X$. Then the
$(\mathcal{A}, \tilde{K})$--module $\text{Ind}_{D}(W) = \mathcal{A} \otimes_{\mathcal{B}} W$ is isomorphic to the $(\mathcal{A}, \tilde{K})$--module $X \otimes S$.
\end{theorem}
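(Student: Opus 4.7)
The plan is to exhibit an explicit $(\mathcal{A},\tilde{K})$-equivariant isomorphism $\bar{\phi}:\mathcal{A}\otimes_{\mathcal{B}}W\to X\otimes S$ and verify that it is a bijection by a dimension/basis matching argument on both sides. The starting point is the multiplication map
\[
\phi:\mathcal{A}\otimes W\longrightarrow X\otimes S,\qquad \phi(a\otimes w)=a\cdot w,
\]
which is manifestly $\mathcal{A}$-linear on the left and $\tilde{K}$-equivariant. The first task is to check that $\phi$ descends to the balanced tensor product $\mathcal{A}\otimes_{\mathcal{B}}W$. Since $W=H^{D}(X)\subset X\otimes S$ is annihilated by $\mathcal{I}$ and is preserved by $\ka_{\Delta}$, and since $\mathcal{B}$ is generated (in each of the three variants discussed in the introduction, in particular the version $\mathcal{B}=U(\ka_{\Delta})\mathcal{A}^{K}$ relevant here) by $\ka_{\Delta}$, $\mathcal{I}$, $C(\p)^{K}$, and $Z(\ka)\otimes 1$, each generator of $\mathcal{B}$ acts on $W$ the same way it acts through its embedding into $\mathcal{A}$. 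Hence for every $b\in\mathcal{B}$ one has $\phi(ab\otimes w)=ab\cdot w=a\cdot(b\cdot w)=\phi(a\otimes bw)$, so $\phi$ annihilates the subspace $Z$ from Section~3 and induces
\[
\bar{\phi}:\mathcal{A}\otimes_{\mathcal{B}}W\longrightarrow X\otimes S.
\]

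Next, I would track what $\bar{\phi}$ does to the explicit spanning set from Theorem~\ref{si}. By construction the vector $(x\otimes y)\otimes w$ in that set is sent to $(x\otimes y)\cdot w=x\cdot(1\otimes y)\cdot w\in X\otimes S$, and each of the generators $w_{(x_{s},y_{t})}$ of $W$ is given as an explicit element of $X\otimes S$ earlier in Section~3. A direct comparison shows that the image of the Theorem~\ref{si} spanning set under $\bar{\phi}$ is exactly the explicit set
\[
\{(F_{2}^{c}E_{3}^{k}\otimes E_{3})\cdot w_{(x_{s},y_{l})},\ \ldots,\ (x\otimes F_{3}F_{4})\cdot w_{(x_{\lambda_{1}+\lambda_{2}+1},y_{l})}\}
\]
that was proved to be a basis of $X\otimes S$ at the end of Section~4 (via the $Z_{t}\otimes E_{3}$ dimension count combined with Propositions~\ref{m1} and~\ref{m2} and the cardinality match against $(S_{t})_{E_{3}}^{2}$).

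The conclusion is then a two-step squeeze. Call $\mathcal{S}$ the spanning set of $\mathcal{A}\otimes_{\mathcal{B}}W$ from Theorem~\ref{si} and $\mathcal{S}'\subset X\otimes S$ its image under $\bar{\phi}$. Since $\mathcal{S}'$ is a basis of $X\otimes S$, the restriction of $\bar{\phi}$ to $\Span\mathcal{S}=\mathcal{A}\otimes_{\mathcal{B}}W$ is surjective. Moreover, any linear relation among the elements of $\mathcal{S}$ would be pushed forward by $\bar{\phi}$ to a relation among the elements of $\mathcal{S}'$; since the latter is linearly independent, there are no such relations, so $\mathcal{S}$ is in fact a basis of $\mathcal{A}\otimes_{\mathcal{B}}W$ and $\bar{\phi}$ is a bijection. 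Combined with the $(\mathcal{A},\tilde{K})$-equivariance verified at the start, this gives the desired isomorphism.

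The one step that actually requires care, rather than bookkeeping, is the descent of $\phi$ to the balanced product: one must be sure that every element of $\mathcal{B}$ really acts on $W\subset X\otimes S$ by the same formula as on $\mathcal{A}$-tensors, i.e.\ that the $\mathcal{B}$-module structure on $W$ coming from the abstract definition of Dirac cohomology matches the one inherited from $X\otimes S$. For the generators $\ka_{\Delta}$ and $\mathcal{I}$ this is tautological, but for the $\mathcal{A}^{K}$-generators (the Casimir of $\ka_{1}$ and $\ka_{2}$, the $C(\p)^{K}$-generator, $D$, and $D_{\ka}$) it relies on the unitary/finite-dimensional comparison $H^{D}(X)=\ker D$ for the discrete series $X$, so that the $\mathcal{A}^{K}$-action factors consistently through the Vogan quotient. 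Once this compatibility is verified, the rest of the argument is the basis/cardinality matching described above.
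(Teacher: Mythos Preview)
Your proposal is correct and follows essentially the same approach as the paper: define the action map $\phi(a\otimes w)=a\cdot w$, observe that it descends to $\mathcal{A}\otimes_{\mathcal{B}}W$, and then use the fact that the spanning set from Theorem~\ref{si} is sent to the explicit basis of $X\otimes S$ constructed in Section~4 to conclude that $\bar{\phi}$ is a bijection. Your treatment of the descent step (compatibility of the $\mathcal{B}$-action on $W$) is in fact more explicit than the paper's, which simply takes this for granted.
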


\begin{remark}
There is one more positive root system that contains the fixed positive root system $\{\epsilon_1 \pm \epsilon_2\}$ for $\ka$. This positive root system is
$$
\{\epsilon_1 \pm \epsilon_2, \epsilon_1, - \epsilon_2\}
$$
and the corresponding Borel subalgebra is 
$$
\mathfrak{b'} = \h \oplus \g_{\epsilon_1 - \epsilon_2} \oplus \g_{\epsilon_1 + \epsilon_2} \oplus \g_{\epsilon_1} \oplus \g_{-\epsilon_2}.
$$
Discrete series representations of $SO_{e}(4,1)$ of the form $A_{\mathfrak{b'}}(\lambda)$  for some $\lambda$ which is admissible for $\mathfrak{b'}$ can also be constructed via Dirac induction. The proof is similar. 
\end{remark}

\begin{remark}
In \cite{PR} the holomorphic discrete series representations were constructed via intermediate version of induction where we tensor over the algebra $\mathcal{B}_1$ generated by $U(\ka_{\Delta})$, by the ideal $\mathcal{I}$ and $C(\p)^{K}$. Here and in \cite{Pr1} we used the reduced version of induction where we tensor over the algebra $\mathcal{B} = U(\ka_{\Delta})\mathcal{A}^{K}$. The remaining question is whether we really need the whole algebra $\mathcal{B}$ or is it possible to construct nonholomorphic discrete series of $SO_{e}(4,1)$ by tensoring over the subalgebra $\mathcal{B}_1 \subset \mathcal{B}$ or even the smaller subalgebra $\mathcal{B}_2$ generated by $U(\ka_{\Delta})$ and the ideal $\mathcal{I}$. 

From  \cite[Corollary~3.2.]{PR}, it follows that an irreducible $\tilde{K}$--module $W$ is contained in the Dirac cohomology of an irreducible $(\g, K)$--module $Y$ if and only if $Y \otimes S$ is a quotient of $Ind_D(W)$. Let $X = A_{\mathfrak{b}}(\lambda)$ be the discrete series representation of $G$ for the Borel subalgebra
$$
\mathfrak{b} = \h \oplus \g_{\epsilon_1 - \epsilon_2} \oplus \g_{\epsilon_1 + \epsilon_2} \oplus \g_{\epsilon_1} \oplus \g_{\epsilon_2}.
$$
and some $\lambda = (\lambda_1, \lambda_2) \in \h^{*}$ which is admissible for $\mathfrak{b}$. Let $F_{\lambda}$ be the irreducible finite-dimensional $(\g, K)$--module with highest weight $\lambda$. Then, from \cite[Theorem~4.2.]{HKP} it follows that
$$
H^{D}(F_{\lambda}) = V_{(\lambda_1 + \frac{1}{2}, \lambda_2 + \frac{1}{2})} \oplus V_{(\lambda_1 + \frac{1}{2}, -\lambda_2 - \frac{1}{2})},
$$
where $V_{(\lambda_1 + \frac{1}{2}, \lambda_2 + \frac{1}{2})}$ is the Cartan component of the tensor product $V_{(\lambda_1, \lambda_2)} \otimes V_{(\frac{1}{2}, \frac{1}{2})}$ for the unique $K$--type $V_{(\lambda_1, \lambda_2)}$ of $F_{\lambda}$, and $V_{(\lambda_1 + \frac{1}{2}, -\lambda_2 - \frac{1}{2})}$ is the Cartan component of the tensor product $V_{(\lambda_1, -\lambda_2)} \otimes V_{(\frac{1}{2}, -\frac{1}{2})}$ for the unique $K$--type $V_{(\lambda_1, -\lambda_2)}$ of $F_{\lambda}$. We note that \cite[Theorem~4.2.]{HKP} is stated for Vogan's version of Dirac cohomology, but it also holds for Dirac cohomology we are using here because of \cite[Proposition~2.5]{PR}, which states that $$H_D(X) = H^{D}(X) = H_{V}^{D}(X) = \Ker D = \Ker D^2$$ if the module $X$ is either unitary or finite-dimensional.

Since the Dirac cohomology $W$ of the discrete series representation $X$ is the PRV component of the tensor product $V_{(\lambda_1 + 1, \lambda_2 + 1)} \otimes V_{(\frac{1}{2}, \frac{1}{2})}$, where $V_{(\lambda_1 + 1, \lambda_2 + 1)}$ is the lowest $K$--type of $X$, we see that $V_{(\lambda_1 + \frac{1}{2}, \lambda_2 + \frac{1}{2})} \subset H^{D}(F_{\lambda})$ is isomorphic to $H^{D}(X)$ not only as $(\mathcal{B}_2, \tilde{K})$ but also as $(\mathcal{B}_1, \tilde{K})$--modules, since $C(\p)^K$ acts only on $V_{(\frac{1}{2}, \frac{1}{2})}$ and not on $V_{(\lambda_1 + 1, \lambda_2 + 1)} \subset A_{\mathfrak{b}}(\lambda)$ or $V_{(\lambda_1, \lambda_2)} \subset F_{\lambda}$. Therefore, $F_{\lambda} \otimes S$ is a quotient of $Ind_D(W)$ in case we tensor over the algebra $\mathcal{B}_1$ or $\mathcal{B}_2$. Of course, from Theorem \ref{themain} it follows that $F_{\lambda} \otimes S$ is not a quotient of $Ind_D(W)$ if we tensor over the algebra $\mathcal{B}$. This can also be proved in the following way.
Since the Casimir element $\Omega_{\ka}$ of $\ka$  acts by the scalar 
$$
(\lambda_1 + \lambda_2 + 2) + (\lambda_1 - \lambda_2) + \frac{1}{2}(\lambda_1 + \lambda_2 + 2)^2 + \frac{1}{2}(\lambda_1 - \lambda_2)^2
$$
on the PRV component $W = V_{(\lambda_1 + \frac{1}{2}, \lambda_2 + \frac{1}{2})} \subset V_{(\lambda_1 + 1, \lambda_2 + 1)} \otimes V_{(\frac{1}{2}, \frac{1}{2})} $, and by the scalar
$$
(\lambda_1 + \lambda_2) + (\lambda_1 - \lambda_2) + \frac{1}{2}(\lambda_1 + \lambda_2)^2 + \frac{1}{2}(\lambda_1 - \lambda_2)^2
$$
on the Cartan component $V_{(\lambda_1 + \frac{1}{2}, \lambda_2 + \frac{1}{2})} \subset V_{(\lambda_1, \lambda_2)} \otimes V_{(\frac{1}{2}, \frac{1}{2})}$, from the identity
\begin{align*}
\Omega_{\ka_{\Delta}} & = \Omega_{\ka} + D_{\ka} + \frac{1}{2} \cdot 1 \otimes \alpha(H_1 + H_2)^2 + \frac{1}{2} \cdot 1 \otimes \alpha(H_1 - H_2)^2 \\
& + 1 \otimes (\alpha(E_1) \alpha(F_1) + \alpha(F_1) \alpha(E_1) + \alpha(E_2) \alpha(F_2) + \alpha(F_2) \alpha(E_2))
\end{align*}
it follows that $D_{\ka} \in \mathcal{B} \setminus \mathcal{B}_1$ acts by different scalars on $W$ and on $V_{(\lambda_1 + \frac{1}{2}, \lambda_2 + \frac{1}{2})} \subset H^{D}(F_{\lambda})$. Therefore,  $F_{\lambda} \otimes S$ is not a quotient of $Ind_D(W)$ if we tensor over the algebra $\mathcal{B}$. 
\end{remark}

\textbf{Acknowledgements:} The author is grateful to Pavle Pand\v{z}i\'{c} for a careful reading of the paper and his comments.

\end{document}